\newtheorem{proposition}{Proposition}[section] 
\newtheorem{corollary}[proposition]{Corollary}
\newtheorem{theorem}[proposition]{Theorem}
\theoremstyle{definition}
\newtheorem{example}[proposition]{Example}
\newtheorem{examples}[proposition]{Examples}
\theoremstyle{remark}
\newtheorem{remark}[proposition]{Remark}
\newcommand{\thlabel}[1]{\label{th:#1}}
\newcommand{\thref}[1]{Theorem~\ref{th:#1}}
\newcommand{\selabel}[1]{\label{se:#1}}
\newcommand{\seref}[1]{Section~\ref{se:#1}}
\newcommand{\colabel}[1]{\label{co:#1}}
\newcommand{\coref}[1]{Corollary~\ref{co:#1}}
\newcommand{\exlabel}[1]{\label{ex:#1}}
\newcommand{\exref}[1]{Example~\ref{ex:#1}}
\newcommand{\eqlabel}[1]{\label{eq:#1}}
\newcommand{\equref}[1]{(\ref{eq:#1})}
\newcommand{\JVHom}{{\sf Hom}}
\newcommand{\fHom}{{\sf fHom}}
\newcommand{\JVEnd}{{\sf End}}
\newcommand{\JVExt}{{\sf Ext}}
\newcommand{\Fun}{{\sf Fun}}
\newcommand{\JVim}{{\sf Im}\,}
\newcommand{\can}{{\ul{\sf can}}}
\def\ot{\otimes}
\def\AA{{\mathbb A}}
\def\BB{{\mathbb B}}
\def\HH{{\mathbb H}}
\def\ZZ{{\mathbb Z}}
\newcommand{\Aa}{\mathcal{A}}
\newcommand{\Cc}{\mathcal{C}}
\newcommand{\Dd}{\mathcal{D}}
\newcommand{\Mm}{\mathcal{M}}
\newcommand{\Oo}{\mathcal{O}}
\newcommand{\Vv}{\mathcal{V}}
\newcommand{\Ww}{\mathcal{W}}
\newcommand{\Yy}{\mathcal{Y}}
\newcommand{\Zz}{\mathcal{Z}}
\def\text#1{{\rm {\rm #1}}}
\def\ol{\overline}
\def\ul{\underline}
\def\dul#1{\underline{\underline{#1}}}
\def\Set{\dul{\rm Set}}
\def\lim{{\rm lim\,}}
\def\vect{{\sf Vect}}
\def\Alg{{\sf Alg}}
\def\Coalg{{\sf Coalg}}
\def\Bialg{{\sf Bialg}}
\def\Hpfalg{{\sf Hpfalg}}
\def\M{{\sf M}}
\def\Fam{{\sf Fam}}
\def\Maf{{\sf Maf}}
\title{Hopf algebras---Variant notions and reconstruction
  theorems}
\author{Joost Vercruysse}
\address{D\'epartement de Math\'ematiques, Universit\'e Libre de Bruxelles, Boulevard du Triomphe, B-1050 Bruxelles, Belgium}
\email{jvercruy@ulb.ac.be}
\begin{document}

\begin{abstract}
Hopf algebras are closely related to monoidal categories. More precise, $k$-Hopf algebras can be characterized as those algebras whose category of finite dimensional representations is an autonomous monoidal category such that the forgetful functor to $k$-vectorspaces is a strict monoidal functor. This result is known as the Tannaka reconstruction theorem (for Hopf algebras). Because of the importance of both Hopf algebras 
in various fields, over the last last few decades, 
many generalizations have been defined. We will survey these different generalizations from the point of view of the Tannaka reconstruction theorem.
\end{abstract}

\maketitle

\section{Introduction}
Since the discovery of the notion of a Hopf algebra in the 1940s, they appeared as useful tools in relation with various fields of mathematics, such as number theory (the group ring of a formal group is a Hopf algebra), algebraic geometry (the algebra of regular functions on an algebraic group is a Hopf algebra, or more generally Hopf algebras are constructed from affine group schemes), Lie theory (the universal enveloping algebra of a Lie algebra is a Hopf algebra), Galois theory and separable field extensions (in relation with so-called Hopf-Galois theory), graded ring theory (one constructs a Hopf algebra from a graded ring and there is a strong relationship between graded modules and Hopf modules), locally compact group theory (quantum group theory), combinatorics (see e.g. \cite{AguMah:Coxeter} and the references therein), quantum mechanics and so on. 

It should be of no surprise that because of the large variety of applications of the theory of Hopf algebras, also a wide variety of mutations on the original definition occurred in the literature. Some of these variations are {\em quasi Hopf algebras} \cite{Drin:QH} that weaken certain (co)associativity constraints, {\em weak Hopf algebras} \cite{bohm:weakhopf} that weaken certain compatibility conditions on the (co)unit, {\em Hopf algebroids} \cite{BohmSzl:hgdax} that allow the transition to a non-commutative base, {\em multiplier Hopf algebras} \cite{VD:Mult} which are non-unital as algebra, {\em group Hopf co-algebras} \cite{Tur} that are a dual version of group graded of Hopf-algebras, and {\em Hopfish algebras} \cite{TWZ:Hopfish} that is a Morita invariant notion of Hopf algebras where structure maps are replaced by bimodules. 

Although the motivation to introduce these alternative notions was often quite diverse, it is surprisingly impressive how many features of the initial Hopf algebra theory can be transferred to each of the generalised versions. An explanation towards this striking fact might be offered by the use of monoidal categories. First of all, Hopf algebras, originally defined over a base field, can be defined in any braided monoidal category. Most of the theory can be quite easily lifted over to this setting, sometimes under additional assumptions such as the existence and preservation of certain (co)limits. Several, but not all, of the above mentioned generalizations can be understood as particular cases of Hopf algebras in a suitably chosen braided monoidal category. Maybe a more subtle approach, that we will use as a starting point in this survey, is not to treat Hopf algebras directly as objects with certain properties {\em in} a braided monoidal category, but to study them as monads {\em on} a monoidal category and characterize them by means of the Tannaka reconstruction (also known as Tannaka duality or Tannaka-Krein duality). 

The Tannaka reconstruction theorem originally stated that a compact
topological group is completely determined by its finite dimensional
representations. This result has been generalised to Hopf algebras and
large classes of quantum groups. Let $k$ be a field and $\vect_k^f$
the category of finite-dimensional $k$-vectorspaces. In this setting,
the Tannaka reconstruction theorem can be stated as follows. 
\index{Hopf algebra}
\index{Tannaka duality}
\begin{theorem}\thlabel{TannakaHopfalg}
There is a bijective correspondence between the following objects:
\begin{enumerate}[(i)]
\item $k$-Hopf algebras $H$
\item autonomous monoidal categories $\Vv$, together with a strict monoidal autonomous functor $U:\Vv\to \vect_k^f$.
\end{enumerate}
Under this correspondence, $\Vv\simeq \M_H^f$ (the category of finite-dimensional represenations of $H$) and $U$ is the usual forgetful functor.
\end{theorem}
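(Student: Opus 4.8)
The plan is to establish the correspondence in two directions and check they are mutually inverse. Starting from a $k$-Hopf algebra $H$, I would let $\Vv = \M_H^f$ and $U$ the forgetful functor. The tensor product of two finite-dimensional $H$-modules $M,N$ carries an $H$-action via the comultiplication $\Delta\colon H\to H\ot H$, the ground field $k$ is an $H$-module via the counit $\varepsilon$, and associativity and unitality of this monoidal structure on $\M_H^f$ follow from coassociativity and counitality of $\Delta$. That the structure is \emph{strict} (on the nose, after the usual identifications in $\vect_k^f$) and that $U$ is strict monoidal is then immediate. For autonomy, given $M\in\M_H^f$ one equips the linear dual $M^*$ with the $H$-action $(h\cdot f)(m) = f(S(h)m)$ using the antipode $S$; the evaluation and coevaluation maps are $H$-linear precisely because of the antipode axioms $S\ast\mathrm{id} = \mathrm{id}\ast S = \eta\varepsilon$, and they satisfy the triangle (zig-zag) identities because they do so in $\vect_k^f$. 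So $\M_H^f$ is autonomous and $U$ is a strict monoidal autonomous functor.

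For the reverse direction, given $(\Vv,U)$ I would reconstruct $H$ as a coend, $H = \int^{X\in\Vv} U(X)^*\ot U(X)$, equivalently as the subspace of $\prod_{X} \JVEnd_k(U(X))$ consisting of \emph{natural} transformations $U\Rightarrow U$ (this coincides with the classical coalgebra $\mathrm{coend}(U)$). The coalgebra structure comes from the universal property of the coend; the algebra structure is induced by the monoidal structure on $\Vv$ together with strictness of $U$ (one uses that $U(X\ot Y)=U(X)\ot U(Y)$ to get a map $H\ot H\to H$ from compatibility of natural transformations with tensor products); the bialgebra compatibility is a diagram chase. Finally the antipode is produced from the autonomous structure: evaluation/coevaluation on each dual object $X^\vee$ yield, via naturality, a linear map $S\colon H\to H$, and one checks it is a convolution inverse of the identity. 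Because $\Vv$ and $U$ are \emph{finitary} (the objects $U(X)$ are finite dimensional), $H$ is well behaved and the resulting object is a genuine $k$-Hopf algebra.

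To close the loop one shows the two constructions are mutually inverse. Starting from $H$, forming $\M_H^f$, and taking $\mathrm{coend}$ of the forgetful functor recovers $H$ as a Hopf algebra — this is the reconstruction statement for the regular representation and its finite-dimensional subquotients, and uses that a Hopf algebra is the union of its finite-dimensional subcoalgebras (the fundamental theorem of coalgebras). Conversely, starting from $(\Vv,U)$, reconstructing $H$ and then forming $\M_H^f$ gives a category equipped with a forgetful functor to $\vect_k^f$, and one must produce a monoidal autonomous equivalence $\Vv\simeq \M_H^f$ commuting with the functors to $\vect_k^f$; the comparison functor sends $X\in\Vv$ to $U(X)$ with its canonical $H$-comodule-turned-module structure, and fullness/faithfulness/essential surjectivity follow from naturality and finite-dimensionality.

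The main obstacle I expect is the reconstruction direction: showing that $\mathrm{coend}(U)$ is not merely a coalgebra but carries a compatible algebra structure making it a Hopf algebra, and — more delicately — that the comparison functor $\Vv\to\M_{\mathrm{coend}(U)}^f$ is an equivalence of \emph{monoidal} categories compatible with the forgetful functors. This requires carefully exploiting strictness of $U$ and the autonomy of $\Vv$ (via duals one controls the size of objects and gets the antipode), and the finiteness hypotheses are exactly what make the coend exist and the comparison functor essentially surjective. The Hopf-algebra-to-category direction, by contrast, is essentially a bookkeeping exercise translating the (co)unit, (co)associativity and antipode axioms into the monoidal and autonomous structure, and I do not anticipate genuine difficulty there.
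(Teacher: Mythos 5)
Your plan follows essentially the same route the paper takes: the easy direction is the ``simple reconstruction'' of \thref{Reconstrbialg} and \thref{ReconstrHopfalg} (monoidal structure from $\Delta,\epsilon$, duals from $S$), and the hard direction is the coend construction underlying \thref{tannakaadjunction} together with the solution of the reconstruction and recognition problems for $\Cc=\vect_k$, which the paper (being a survey) delegates to Joyal--Street, Street, McCurdy and Sch\"appi rather than proving in detail. One small correction to your sketch: the coend $\int^{X}U(X)^*\ot U(X)$ is \emph{not} the subspace $\mathrm{Nat}(U,U)\subseteq\prod_X\JVEnd_k(U(X))$ --- that is the \emph{end}, an algebra which is the linear dual of the coend coalgebra --- and only the coend proper (a quotient of $\bigoplus_X U(X)^*\ot U(X)$) carries the universal comultiplication your argument needs, so you should work with it (or, dually, phrase the reconstruction in terms of comodules) when producing the coalgebra structure.
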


After recalling some necessary notions from monoidal categories and basic Hopf algebra theory in Sections \ref{se:Prel} and \ref{se:Hopf}, it is our aim is to recall in \seref{Tannaka} the basic ideas of the Tannaka reconstruction theorem and some variations.  
We make a distinction between what we call ``simple reconstruction'', where the algebra (or dually coalgebra) object is already given by the onset, and the reconstruction concerns only the bialgebra or Hopf algebra structure in relation with the monoidal structure on the category of (co)representations (see \seref{simplemonad} for the case of monads and \seref{simplealgebra} for the algebra case); and what we could call ``the difficult part'' of the reconstruction. The latter concerns the ``real'' Tannaka reconstruction, and allows to reconstruct the algebra (or coalgebra) object itself from its category of (usually only finite-dimensional) (co)representations (see \seref{realTannaka}).

We will then study in \seref{Variations} how the variations on the notion of a Hopf algebra are related to variations of the Tannaka theorem, by changing the base category, the properties of the `forgetful' functor, or both, and show that the different generalizations of Hopf algebras can be re-obtained in a natural way. Of course, as this is a survey paper, the results that will be stated are not meant to be exclusively new. All (or most) theorems have appeared before, we will try to be as precise as possible with references, and we will refer for most proofs to these references as well.

There already exist several excellent surveys about Tannaka reconstruction, such as \cite{HewRos}, \cite{JoyStr:Tannaka}, \cite{Sch:Tannaka}. It makes no sense to repeat or copy this work here. We do not intend to explain very precisely how the reconstruction is obtained explicitly, and what the motivation originally was to do this (which is done perfectly in the references above), but we will use this construction to (hopefully) provide some insight in the `zoo' of Hopf-algebra-like structures.

Another survey that treats the different recent generalizations of Hopf algebras is \cite{Kar:survey}. The difference with the present paper is firstly the central role that we give to the Tannaka reconstruction theorem and secondly the fact that (for example) Multiplier Hopf algebras and group Hopf co-algebras are not considered in Karaali's paper. Nevertheless, we certainly can recommend this paper for a different point of view, especially for the treatment of quasi Hopf algebras, weak Hopf algebras and Hopf algebroids from the point of view of the dynamical quantum Yang Baxter equation, which is not considered here.

\section{Preliminaries}\selabel{Prel}

We suppose that the reader is familiar with the technicalities of
monoidal categories, as it is one of the main subjects of this
book. This section is only meant to fix the notation and
terminology. For more details, we refer to e.g.\ \cite{MacLane71}
Chapter VII and IX or to \cite{Chap11}. First of all, for an object $X$ in a category $\Cc$, we denote
the identity morphism on $X$ by $id_X$ or just by $X$. 
$$\xymatrix{X \ar[rr]^{id_X=X} && X}.$$

\subsubsection*{Monoidal categories}
\index{Category!monoidal}
Recall that a {\em monoidal category} $(\Cc,\ot,I,a,\ell,r)$ consists of a category $\Cc$, a functor $\ot:\Cc\times\Cc\to \Cc$, a monoidal unit object $I\in \Cc$, an associativity constraint (a natural isomorphism) $a_{X,Y,Z}:X\ot (Y\ot Z)\to (X\ot Y)\ot Z$ and unit constraints (natural isomorphisms) $\ell_X:I\ot X\to X$ and $r_X:X\ot I\to X$, satisfying suitable compatibility conditions. 
By Mac Lane's coherence Theorem, every monoidal category $(\Cc,\ot,I,a,l,r)$ is monoidally equivalent to a strict monoidal category $(\Cc',\ot',I',a',l',r')$, i.e. $a'$, $l'$ and $r'$ are identities, so there is no need to write them. As a consequence of this Theorem, we will omit also to write the data $a,l,r$ in the remaining (unless mentioned explicitly otherwise), a monoidal category will be shortly denoted by $(\Cc,\ot,I)$. We will make computations and definitions as if $\Cc$ was strict monoidal, however, by coherence, everything we do and prove remains valid in the non-strict setting (this will have important implications for quasi-Hopf algebras).

\subsubsection*{Braidings and symmetries}
\index{Category!braided monoidal} We call a monoidal category  $(\Cc,\ot,I,a,l,r)$
{\em braided} if
there exists a natural isomorphisms
$\gamma_{X,Y}:{X\ot Y}\to Y\ot X$, for all $X,Y\in \Cc$ satisfying
appropriate compatibility conditions with $a$, $l$ and $r$. 
If
$\gamma_{X,Y}^{-1}=\gamma_{Y,X}$ for all $X,Y\in\Cc$, then $\Cc$ is
said to be a {\em symmetric monoidal category}.
\index{Category!symmetric monoidal}

\subsubsection*{Rigidity}
\label{Rigid}
An object $X$ in a monoidal category is called {\em left rigid} if there exists an object $X^*$ together with morphisms $\eta:I\to X\ot X^*$ and $\epsilon:X^*\ot X\to I$ such that 
$$X\ot\epsilon\circ a^{-1}\circ \eta\ot X=X, \quad \epsilon\ot X^*\circ a\circ X^*\ot\eta= X^*$$
A {\em right rigid} object is defined symmetrically. A monoidal
category is said to be {\em left rigid} (resp. right rigid,
resp. rigid) if every object is left (resp. right, resp. left and
right) rigid. Another name for a rigid monoidal category is an {\em
  autonomous (monoidal) category}. If $\Cc$ is braided, then it is
right rigid if and only if it is left rigid.
\index{Category!autonomous}

\index{Category!closed monoidal}
A right {\em closed monoidal category} is a monoidal category $\Cc$ such that each endofunctor $X\ot -:\Cc\to \Cc$ associated to an object $X\in\Cc$ has a right adjoint $[X,-]$. A monoidal category is left closed if each endofunctor of the form $-\ot X$ has a right adjoint. Braided monoidal categories are left closed if and only if they are right closed, that is they are closed for short. If a category is right (resp. left) rigid, then it is right (resp. left) closed and $[X,-]\simeq X^*\ot -$.

\subsubsection*{Monoidal functors}
We warn the reader that our terminology of monoidal functors differs
slightly from the one used in \cite{Chap11} in this volume. What is called a monoidal functor in \cite{Chap11} is called a {\em strong} monoidal functor in this note. If one uses the terminology of \cite{Chap11}, then what is called a monoidal functor below, should be refered to as a {\em lax} monoidal functor.

\index{Functor!monoidal}
A functor $F:\Cc\to\Dd$ between the monoidal categories $(\Cc,\ot,I)$ and $(\Dd,\odot,J)$ is called a {\em monoidal functor} if there exists a $\Dd$-morphism $\phi_0:J\to F(I)$ and a natural transformation $\phi_{X,Y}:F(X)\odot F(Y)\to F(X\ot Y)$, $X,Y\in \Cc$, satisfying suitable compatibility conditions with relation to the associativity and unit constraints of $\Cc$ and $\Dd$. Furthermore, we make a difference between the notions of a {\em strong} monoidal functor $(F,\phi_0,\phi)$, where $\phi_0$ is an isomorphism and $\phi$ is a natural isomorphism and a {\em strict} monoidal functor $(F,\phi_0,\phi)$, where $\phi_0$ is the identity morphism and $\phi$ is the identity natural transformation. Dually, an {\em op-monodial functor} $F:\Cc\to \Dd$ is a functor for which there exists a morphism $\psi_0:F(I)\to J$ in $\Dd$   and morphisms $\psi_{X,Y}:F(X\ot Y)\to F(X)\odot F(Y)$ in $\Dd$, that are natural in $X, Y\in \Cc$, satisfying suitable compatibility conditions.
A strong monoidal functor $(F,\phi_0,\phi)$ is automatically op-monoidal. Indeed, one can take $\psi_0=\phi_0^{-1}$ and $\psi=\phi^{-1}$.
If $\Cc$ and $\Dd$ are braided monoidal categories, then a {\em braided monoidal functor} $F:\Cc\to \Dd$ is a monoidal functor such that $F\gamma_{X,Y}\circ \phi_{X,Y}=\phi_{Y,X}\circ \gamma_{FX,FY}:FX\odot FY\to F(Y\ot X)$.
Of course, one has a canonical definition of a {\em monoidal natural transformation} between monoidal functors.

Monoidal functors behave nicely with respect to adjuctions, as can be seen from the following classical theorem.

\begin{theorem}
Let $F:\Cc\to\Dd$ be a left adjoint to $G:\Dd\to \Cc$, where $(\Cc,\ot,I)$ and $(\Dd,\odot,J)$ are monoidal categories. Then $F$ is op-monoidal if and only if $G$ is a monoidal functor. 
\end{theorem}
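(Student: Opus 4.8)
The plan is to establish the sharper statement that op-monoidal structures on $F$ correspond bijectively to monoidal structures on $G$, by means of the calculus of mates (adjoint transposition) for the adjunction $F\dashv G$; this is the content of Kelly's \emph{doctrinal adjunction}. Write $\eta:\mathrm{id}_\Cc\to GF$ and $\varepsilon:FG\to\mathrm{id}_\Dd$ for the unit and counit, so that $\varepsilon F\circ F\eta=\mathrm{id}_F$ and $G\varepsilon\circ\eta G=\mathrm{id}_G$. To an op-monoidal structure $(\psi_0,\psi)$ on $F$ I would associate the maps
$$\phi_0:=G(\psi_0)\circ\eta_I:I\to G(J),\qquad \phi_{A,B}:=G(\varepsilon_A\odot\varepsilon_B)\circ G(\psi_{GA,GB})\circ\eta_{GA\ot GB}:GA\ot GB\to G(A\odot B),$$
and conversely, to a monoidal structure $(\phi_0,\phi)$ on $G$ the maps
$$\psi_0:=\varepsilon_J\circ F(\phi_0):F(I)\to J,\qquad \psi_{X,Y}:=\varepsilon_{FX\odot FY}\circ F(\phi_{FX,FY})\circ F(\eta_X\ot\eta_Y):F(X\ot Y)\to FX\odot FY.$$
These are nothing but the images of $\psi,\psi_0$ (resp.\ $\phi,\phi_0$) under the transposition bijections $\Dd(FZ,W)\cong\Cc(Z,GW)$, taken with two free variables for the binary part and with none for the unit part.

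The first step is to observe that $\phi$ and $\phi_0$ so defined are natural in $A,B$ — automatic, since they are composites of natural transformations — and that the two assignments $\psi\leftrightarrow\phi$ (and $\psi_0\leftrightarrow\phi_0$) are mutually inverse. This is the standard diagram chase: one inserts the definitions into one another and collapses using naturality of $\eta$ and $\varepsilon$ together with the two triangle identities, exactly as in the proof that the hom-set bijection of an adjunction is natural in both arguments.

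The substantial step is to show that $(\psi_0,\psi)$ satisfies the op-monoidal coherence conditions (compatibility with the associativity and unit constraints of $\Cc$ and $\Dd$) \emph{if and only if} $(\phi_0,\phi)$ satisfies the monoidal coherence conditions. The conceptual reason is that $(\Cc,\ot,I)$ and $(\Dd,\odot,J)$ are pseudomonoids in the $2$-category $\cat$ of categories, op-monoidal (resp.\ monoidal) functors are precisely their oplax (resp.\ lax) morphisms, and taking mates along $F\dashv G$ is compatible with horizontal and vertical pasting of $2$-cells; hence the pasting identity expressing each op-monoidal axiom for $\psi$ is carried by transposition to the pasting identity expressing the corresponding monoidal axiom for $\phi$, and back. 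Concretely one writes each axiom as an equality of composite natural transformations built from $\ot$, $\odot$, the constraints, $\psi$ (or $\phi$) and the adjunction data, and transposes each side.

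I expect the main obstacle to be nothing deeper than the bookkeeping in this last step: transposing the associativity and unit coherence diagrams is notationally heavy once the associators and unitors are present. I would therefore first invoke Mac Lane's coherence theorem to replace $\Cc$ and $\Dd$ by strict monoidal categories (as is done throughout this text), run the diagram chase there — where each side of the correspondence carries a single associativity equation and two unit equations — and then transport the conclusion back. The identical mate argument, applied to the naturality squares of the braidings, gives the braided refinement should one want it.
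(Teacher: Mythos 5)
The paper states this result as a ``classical theorem'' and gives no proof at all, so there is no argument of the author's to compare against; judged on its own terms, your proposal is correct and is exactly the standard proof (Kelly's doctrinal adjunction). Your transposition formulas for $\phi_0,\phi_{A,B}$ and $\psi_0,\psi_{X,Y}$ are the right mates, the mutual-inverse check does collapse via the two triangle identities together with naturality of $\psi$ (resp.\ $\phi$) in both variables, and the transfer of the coherence axioms under mate-taking is the standard pasting argument; you in fact establish the sharper statement --- a bijection between op-monoidal structures on $F$ and monoidal structures on $G$ --- of which the paper's ``if and only if'' is the immediate corollary.
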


If $(F,G)$ is an adjoint pair of monoidal functors such that the unit and counit of this adjunction are monoidal natural transformations then we call $(F,G)$ a {\em monoidal adjunction}. By the above lemma, the left adjoint of a monoidal adjucntion is also op-monodial, in fact it is even strong monoidal.

In a natural way, one defines a monoidal functor between rigid monoidal categories to be rigid (or autonomous), if it preserves dual objects. A monoidal functor between (right, left) closed monoidal categories is said to be (right, left) closed if it commutes with the adjoints of the endofunctors of type $X\ot -$ .

\section{Bialgebras and Hopf algebras in monoidal categories}\selabel{Hopf}

\subsection{Algebras and coalgebras with their representations}

\index{Algebra}
\index{Monoid}
\index{Coalgebra}
\index{Comonoid}
Let $\Cc=(\Cc,\ot,I)$ be a monoidal category. An {\em algebra} or {\em monoid}
in $\Cc$ is a triple
$A=(A,m,u)$, where $A\in \Aa$ and $m: A\ot A\to A$ and $u: I\to A$ are
morphisms in $\Cc$ such that the following diagrams commute:
$$
\xymatrix{
A\ot A\ot A\ar[rr]^{m\ot A}\ar[d]_{A\ot m}&& A\ot A\ar[d]^{m}\\
A\ot A\ar[rr]^{m}&&A}~~~~~~
\xymatrix{I\ot A\ar[r]^{u\ot A}\ar[rd]_{\cong}&
A\ot A\ar[d]^{m}&A\ot I\ar[l]_{A\ot u}\ar[dl]^{\cong}\\
&A&}
$$
A {\em coalgebra} or {\em comonoid} in a monoidal category $\Aa$ is an algebra
in the opposite category $\Cc^{\rm op}$. 

\index{Module}
\index{Representation}
A right {\em module} or {\em representation} over an algebra $A$ in $\Cc$ is a pair $(M, \rho_M)$ where $M$ is an object of $\Cc$ and $\rho_M:M\ot A\to M$ is a morphism in $\Cc$ satisfying the following associativity and unitary condition
\[
\xymatrix{
M\ot A\ot A\ar[rr]^{\rho_M\ot A}\ar[d]_{M\ot m}&& M\ot A\ar[d]^{\rho_M}\\
M\ot A\ar[rr]^{\rho_M}&&M} \quad
\xymatrix{
M \ot A \ar[rr]^{\rho_M} && M \\
M\ot I \ar[u]^-{M\ot u} \ar[urr]_-{\cong}
}
\]
A left module $(M,\lambda_M)$ is defined in a symmetric way by means of a left action $\lambda:A\ot M\to M$.

\index{Comodule}
A right {\em comodule} $(M,\rho^M)$ over a coalgebra $C$ in $\Cc$ is a right module over $C$, considered as an algebra in $\Cc^{\rm op}$. In particular, the morphism $\rho^M:M\to M\ot C$ in $\Cc$ is called the coaction.

Morphisms of algebras, coalgebras, modules and comodules are defined in an obvious way as structure-preserving morphisms from $\Cc$. This leads to the introduction of the categories $\Alg(\Cc)$, $\Coalg(\Cc)$, $\Cc_A$ and $\Cc^C$ of respectively algebras in $\Cc$, coalgebras in $\Cc$, right modules over a fixed algebra $A$ in $\Cc$ and right comodules over a fixed coalgebra in $\Cc$.

\begin{example}\exlabel{3.1.1}
Let $k$ be a commutative ring. An algebra in $\Mm_k$ is a $k$-algebra, and
a coalgebra in $\Mm_k$ is a $k$-coalgebra. Modules and comodules are the classical ones.
\end{example}

\begin{example}\exlabel{3.1.2}
An algebra in $\Set$ is a monoid $(G,m,u)$. A module over $G$ is a $G$-set. 
Every set $X$ has a unique structure of a coalgebra in $\Set$. The comultiplication $\Delta: X\to X\times X$ is
the diagonal map $\Delta(x)=(x,x)$, and the counit is the unique map $\varepsilon:X\to \{\ast\}$. A comodule over $X$ is then a set $Y$ together with a (any) map $f:Y\to X$. The comultiplication $\rho_f:Y\to Y\times X$ is given by $\rho_f(y)=(y,f(y))$.
\end{example}

\begin{example}\exlabel{Ccategory}
Consider the category of categories (where of course some care has to be taken in to account with respect to the kind of ``largeness'' of the categories one is considering, to overcome set-theoretic problems, but we omit this discussion here), with Cartesian product of categories as tensor product. An algebra in this category is a (strict) monoidal category. A module over a monoidal category $\Cc$, is a category $\Mm$ together with a bifunctor
$\ot_\Mm:\Mm\times \Cc\to \Mm$, such that we have natural transformations
$M\ot_\Mm (X\ot Y) \cong (M\ot_\Mm X)\ot_\Mm Y$ and $M\ot_\Mm I\cong M$ that satisfy a suitable collection of compatibility conditions. We call such a category a right $\Cc$-category. 
\end{example}

\exref{Ccategory} provides us with a tool to consider a wider range of modules and comodules. Let $(A,m,u)$ be an algebra in the monoidal category $\Cc$ and let $(\Mm,\ot_\Mm)$ be a right $\Cc$-category. Then a right $A$-module in $\Mm$ is an object $M\in\Mm$ endowed with a morphism $\rho_M:M\ot_\Mm A\to M$ in $\Mm$ satisfying the following associativity and unitary contraints
\[
\xymatrix{
(M\ot_\Mm A)\ot_\Mm A\ar[rr]^{\rho_M\ot_\Mm A}\ar[d]_\cong&& M\ot_\Mm A\ar[dd]^{\rho_M}\\
M\ot_\Mm (A\ot A) \ar[d]_{M\ot_\Mm m} \\
M\ot_\Mm A\ar[rr]^{\rho_M}&&M} \quad
\xymatrix{
M \ot_\Mm A \ar[rr]^{\rho_M} && M \\
M\ot_\Mm I \ar[u]^-{M\ot_\Mm u} \ar[urr]_-{\cong}
}
\]

\begin{example}\exlabel{3.1.3}
Let $A$ be an associative ring with unit. The category of (unital) $A$-bimodules ${}_A\Mm_A$ is a monoidal
category; the monoidal product is given by the tensor product $\ot_A$ over $A$, and the unit object by $A$.
An algebra in ${}_A\Mm_A$ is just a ring morphism $\iota:A\to R$. A coalgebra in 
${}_A\Mm_A$ is called an $A$-coring (see e.g.\ \cite{BrzWis:book}). One can easily check that $\Mm_A$ is a right ${_A\Mm_A}$-category, where we use again the tensor product over $A$. So we can compute (co-)modules over an $A$-(co)ring in $\Mm_A$.
\end{example}

\begin{example}
Monads and comonads are algebras and coalgebras in a monoidal category of endofunctors ${\sf EndoFun}(\Cc)$ on a category $\Cc$. I.e.\ a monad is an endofunctor $\AA:\Cc\to\Cc$, with natural transformations $m:\AA\AA\to \AA$ and $u:id_\Cc\to \AA$ satisfying suitable unitary and associativity conditions.
Obviously, $\Cc$ is a left ${\sf EndoFun}(\Cc)$-category. A module over a monad $(\AA,m,u)$ in $\Cc$ called an Eilenberg-Moore object. Explicitly, this is a pair $(X,\rho_X)$, where $X\in \Cc$ and $\rho_X:\AA(X)\to X$ is a morphism in $\Cc$, satisfying the obvious associativity and unitary conditions. We denote the category of Eilenberg-Moore objects (EM-category for short) by $\Cc_\AA$. If $\BB$ is a comonad, then we denote the category of comodules over $\BB$ in $\Cc$ (also called the category of Eilenberg-Moore objects) by $\Cc^\BB$.

If $(A,m,u)$ is an algebra {\em in} a monoidal category $\Cc$, then this induces canonically a monad $(\AA,M,U)$ {\em on} $\Cc$ by defining $\AA=-\ot A$, $M=-\ot m:\AA\AA=-\ot A\ot A\to \AA=-\ot A$ and $U\simeq u$ by putting for any $X\in \Cc$, $U_X=\xymatrix{X\cong X\ot I\ar[r]^-{X\ot u} & X\ot A = \AA X}$. The EM-category of $\AA$ is exactly the category of right $A$-modules.
\end{example}

\subsection{Bialgebras and Hopfalgebras with their representations}

Let $(\Cc,\ot,I,\gamma)$ be a braided monoidal category.
Then $\Alg(\Cc)$ and $\Coalg(\Cc)$ are monoidal categories with a strict monoidal forgetful functor to $\Cc$. Indeed, for any two algebras $(A,m_A,u_A)$ and $(B,m_B,u_B)$, we can construct a new algebra
$$(A\ot B,m=(m_A\ot m_B)\circ (A\ot \gamma_{B,A}\ot B), u_A\ot u_B).$$

A {\em bialgebra} in $\Cc$ is a coalgebra in the monoidal category $\Alg(\Cc)$ of algebras or equivalently, an algebra in the monoidal category of coalgebras $\Coalg(\Cc)$. Explicitly, a bialgebra in $\Cc$ is an object $B$ enriched with an algebra structure $(B,m,u)$, and a coalgebra structure $(B,\Delta,\epsilon)$ such that any of the following sets of equivalent conditions hold
\begin{itemize}
\item $\Delta$ and $\epsilon$ are algebra maps;
\item $m$ and $u$ are coalgebra maps;
\item the following diagrams are commutative
\begin{eqnarray}\eqlabel{bialgebra}
&\xymatrix{
B \ot B \ar[rrr]^-{(B\ot \gamma_{B,B}\ot B)\circ \Delta\ot \Delta} \ar[d]_-m &&& B\ot B\ot B\ot B \ar[d]^-{m\ot m}\\
B \ar[rrr]_-\Delta &&& B\ot B
} \qquad
\xymatrix{
I\ar[r]^-u & B\ar@{=}[d]\\
& B \ar[ul]^-{\epsilon}
}\\
&\xymatrix{
B\ar[rr]^-{\Delta} && B\ot B\\
I \ar[u]^u \ar[rr]^\cong && I\ot I \ar[u]_-{u\ot u}
} \qquad
\xymatrix{
B\ot B\ar[d]_{\epsilon\ot\epsilon} \ar[rr]^-{m} && B \ar[d]^-{\epsilon}\\
I\ot I  \ar[rr]^\cong && I 
}\nonumber
\end{eqnarray}
\end{itemize}
\index{Bialgebra}

An {\em antipode} for a bialgebra $H$ in $\Cc$ is a $\Cc$-morphism $S:H\to H$ such that 
$$m\circ (H\ot S)\circ \Delta = u\circ \epsilon =m\circ (S\ot H)\circ \Delta.$$
In fact, this means that $S$ is an inverse for the identity map on $H$ in the monoid $\JVEnd_\Cc(H)$ for the convolution product $\ast$ that is defined by
$$f\ast g=m\circ (f\ot g)\circ \Delta\in \JVEnd_\Cc(H),$$
for any $f,g\in\JVEnd_\Cc(H)$.
A {\em Hopf algebra} in $\Cc$ is a bialgebra that possesses an antipode. 
An immediate property of the antipode tells us that it is an anti-algebra morphism and an anti-coalgebra morphism.
Some authors only consider Hopf algebras with invertible antipodes, here we will mention explicitly when the antipode is invertible (with respect to usual composition).
\index{Hopf algebra}

Morphisms of bialgebras are $\Cc$-maps that preserve both the algebra and coalgebra structure. Morphisms of Hopf algebras are morphisms of the underlying bialgebras. It can be proven that Hopf algebra morphisms preserve also the antipode. Hence we constructed the category $\Bialg(\Cc)$ of bialgebras in $\Cc$ and its full subcategory $\Hpfalg(\Cc)$ of Hopf algebras in $\Cc$.

For a bialgebra (hence also for a Hopf algebra), we can consider its category of right (resp. left) modules and right (resp. left) comodules. Another interesting category is the category of Hopf modules. A (right,right) {\em Hopf module} over a bialgebra $H$ is a triple $(M,\rho_M,\rho^M)$, where $(M,\rho_M)$ is a right $H$-module, $(M,\rho^M)$ is a right $H$-comodule, and the following compatibility condition holds
$$(\rho_M\ot m)\circ (M\ot \gamma_{H,H}\ot H)\circ (\rho^M\ot \Delta)=\rho^M\circ \rho_M:M\ot H\to M\ot H.$$
Morphisms of $H$-Hopf modules are $\Cc$-morphisms that are at the same time $H$-module morphisms and $H$-comodule morphisms. The category of (right,right) $H$-Hopf modules is denoted by $\Cc^H_H$. For any bialgebra, we can consider the functor
\begin{equation}
\eqlabel{comparisionHopf}
-\ot H:\Cc\to \Cc^H_H
\end{equation}
If $\Cc$ admits equalizers, then this functor has a right adjoint that we denote by $(-)^{co H}$, the functor that takes $H$-coinvariants and that is computed by the following equalizer at any $M\in \Cc^H_H$
$$\xymatrix{ M^{co H}\ar[rr] && M \ar@<.5ex>[rr]^-{\rho^M} \ar@<-.5ex>[rr]_-{M\ot u} &&M\ot H}.$$ If $H$ is a Hopf algebra, the fundamental theorem for Hopf modules says that this functor is an equivalence of categories \cite{Tak:braidedhopf}. The fact that this functor is an equivalence of categories is even equivalent with the bialgebra $H$ being a Hopf algebra. 
\begin{theorem}[Fundamental theorem for Hopf modules]\thlabel{fundamental}
Let $H$ be a bialgebra in a braided monoidal category $\Cc$ with equalizers, then the following statements are equivalent.
\begin{enumerate}[(i)]
\item the functor $(-)^{co H}:\Cc^H_H\to \Cc$ is fully faithful;
\item the pair $(-\ot H,(-)^{co H})$ is an equivalence of categories between $\Cc$ and $\Cc_H^H$;
\item $\can=(m\ot H)\circ (H\ot \Delta):H\ot H\to H\ot H$ is a $\Cc$-isomorphism;
\item $H$ admits an antipode, i.e.\ $H$ is a Hopf algebra in $\Cc$.
\end{enumerate}
\end{theorem}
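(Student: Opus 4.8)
The plan is to establish the cycle $(iii)\Rightarrow(iv)\Rightarrow(ii)\Rightarrow(i)\Rightarrow(iii)$, recording also the familiar explicit converse $(iv)\Rightarrow(iii)$; everything except the last implication is bookkeeping. For $(iv)\Rightarrow(iii)$, given an antipode $S$ one writes down the candidate
$$\can^{-1}=(m\ot H)\circ(H\ot S\ot H)\circ(H\ot\Delta)$$
and checks $\can\circ\can^{-1}=\can^{-1}\circ\can=H\ot H$ by a diagram chase using associativity of $m$, coassociativity of $\Delta$, the (co)unit axioms and the antipode equations; this uses neither the braiding nor equalizers. For $(iii)\Rightarrow(iv)$ I would first record the elementary identities, all immediate from associativity, coassociativity and the counit axiom: $\can$ is left $H$-linear for multiplication on the first tensor factor, $\can\circ(m\ot H)=(m\ot H)\circ(H\ot\can)$; it is right $H$-colinear for comultiplication on the second tensor factor, $(H\ot\Delta)\circ\can=(\can\ot H)\circ(H\ot\Delta)$; and $(H\ot\epsilon)\circ\can=m$ together with $\can\circ(u\ot H)=\Delta$ (modulo $I\ot H\cong H$). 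The first two pass to $\can^{-1}$, and the last two give $m\circ\can^{-1}=H\ot\epsilon$ and $\can^{-1}\circ\Delta=u\ot H$. Then $S:=(H\ot\epsilon)\circ\can^{-1}\circ(u\ot H)$ is an antipode: $m\circ(H\ot S)\circ\Delta=u\circ\epsilon$ follows by sliding the first-factor multiplication through $\can^{-1}$ and invoking $\can^{-1}\circ\Delta=u\ot H$, while $m\circ(S\ot H)\circ\Delta=u\circ\epsilon$ follows by sliding the second-factor comultiplication through $\can^{-1}$ (which yields $(S\ot H)\circ\Delta=\can^{-1}\circ(u\ot H)$) and invoking $m\circ\can^{-1}=H\ot\epsilon$.

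For $(iv)\Rightarrow(ii)$ I would simply invoke the classical fundamental theorem for Hopf modules in a braided monoidal category (\cite{Tak:braidedhopf}): when $H$ has an antipode, both the counit $\rho_M\circ(\iota_M\ot H):M^{co H}\ot H\to M$ and the unit $X\to(X\ot H)^{co H}$ of the adjunction $(-\ot H,(-)^{co H})$ are isomorphisms, the quasi-inverse of the counit being assembled from $\rho^M$, $S$ and $\rho_M$ (in $\vect_k$ it is $m\mapsto\sum m_{(0)}S(m_{(1)})\ot m_{(2)}$, whose first leg lands in $M^{co H}$ by the antipode equations), with the triangle composites checked to be identities via the Hopf-module axiom. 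Hence $(-\ot H,(-)^{co H})$ is an equivalence, which is $(ii)$; and $(ii)\Rightarrow(i)$ is immediate since an equivalence is in particular fully faithful.

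The substantive step is $(i)\Rightarrow(iii)$. Full faithfulness of $(-)^{co H}$ forces the counit $\rho_M\circ(\iota_M\ot H):M^{co H}\ot H\to M$ to be an isomorphism for \emph{every} Hopf module $M$, so the idea is to produce one whose counit is exactly $\can$. I would take $P:=H\ot H$ with the \emph{codiagonal} right action $\rho_P=(m\ot m)\circ(H\ot\gamma_{H,H}\ot H)\circ(H\ot H\ot\Delta)$ and the right coaction $\rho^P=H\ot\Delta$ living on the second tensor factor; that this is a Hopf module is a direct consequence of the bialgebra axioms \equref{bialgebra}. Next I would observe that, in any bialgebra, $I\xrightarrow{u}H\rightrightarrows H\ot H$ (the two maps being $\Delta$ and $H\ot u$) is a \emph{split} equalizer, with splittings $\epsilon:H\to I$ and $\epsilon\ot H:H\ot H\to H$; being split it is absolute, so applying $H\ot(-)$ shows that $H\ot u:H\to H\ot H$ is the equalizer defining $P^{co H}$ — in particular $P^{co H}\cong H$ with $\iota_P=H\ot u$, and no assumption on preservation of equalizers by $H\ot(-)$ is needed. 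The counit $\rho_P\circ(\iota_P\ot H)$ then unwinds to $\rho_P\circ(H\ot u\ot H)=(m\ot H)\circ(H\ot\Delta)=\can$, which is therefore an isomorphism by $(i)$, closing the cycle. I expect this to be the only real obstacle: $(iii)\Leftrightarrow(iv)$ and $(iv)\Rightarrow(ii)\Rightarrow(i)$ are standard, whereas $(i)\Rightarrow(iii)$ hinges on spotting the codiagonal-action/one-leg-coaction structure on $H\ot H$ and on the split-equalizer trick that computes $P^{co H}$; everything else — the diagram chases hidden behind "pass to $\can^{-1}$" and the antipode-equation verifications — is routine and, by Mac Lane coherence, may be carried out as if $\Cc$ were strict.
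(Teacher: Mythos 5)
Your proof is correct. Note that the paper itself does not prove \thref{fundamental} at all --- it defers to Takeuchi's survey \cite{Tak:braidedhopf} --- so there is no in-text argument to compare against; judged on its own, your cycle $(iii)\Rightarrow(iv)\Rightarrow(ii)\Rightarrow(i)\Rightarrow(iii)$ is complete and the two genuinely non-routine ingredients are handled properly: the Hopf module $P=H\ot H$ with codiagonal action and second-leg coaction, whose adjunction counit $\rho_P\circ(\iota_P\ot H)$ unwinds exactly to $\can$ (using naturality of $\gamma$ against $u$ to absorb the braiding), and the split-equalizer observation that computes $P^{co H}\cong H$ without any hypothesis on $H\ot(-)$ preserving limits. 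The extraction of $S=(H\ot\epsilon)\circ\can^{-1}\circ(u\ot H)$ from the left-linearity and right-colinearity of $\can$ is the standard route for $(iii)\Rightarrow(iv)$ and your two "sliding" computations check out. The only step you outsource is $(iv)\Rightarrow(ii)$, cited to the same reference the paper uses, which is consistent with the survey's own practice.
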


It is generally known and easily verified that a monoidal functor $F:\Cc\to \Dd$ sends an algebra $A$ in $\Cc$ to an algebra $F(A)$ in $\Dd$. Similarly, an op-monoidal functor sends a coalgebra $C$ in $\Cc$ to a coalgebra $F(C)$ in $\Dd$. Finally, a strong monoidal functor sends a bialgebra $B$ (respectively a Hopf algebra $H$) to a bialgebra $F(B)$ (respectively a Hopf algebra $F(H)$) in $\Dd$.

\begin{examples}
\begin{enumerate}[(1)]
\item As in $\Set$ all objects are in a unique way coalgebras, $\Coalg(\Set)=\Set$. Consequently every monoid, i.e. every algebra in $\Set$, also an algebra in $\Coalg(\Set)$, hence a bialgebra in $\Set$. 
A Hopf algebra in $\Set$ is nothing else than a group.
\item
A bialgebra, resp. a Hopf algebra, in $\Mm_k$ is a classical $k$-bialgebra, resp. Hopf $k$-algebra.
As the linearisation functor $k-:\Set\to \Mm_k$ is a strong monoidal functor, every group algebra $kG$ is in a canonical way a (cocommutative) Hopf algebra. On the other side, the contravariant functor $\Fun(-,k):\Set\to \Mm_k$, which sends a set $X$ to the vectorspace of $k$-valued functions on $X$, is only a monoidal functor (if we consider it as a covariant functor from $\Set^{op}\to \Mm_k$). Hence $\Fun(X,k)$ has the structure of a (commutative) algebra for any set $X$. 
If we consider only finite sets, then $\Fun(-,k):\Set^f\to\Mm_k$ becomes again strong monoidal and hence $\Fun(G,k)$ Hopf algebra is a (commutive) Hopf algebra if $G$ is a finite group,
but in general $\Fun(G,k)$ is no longer a Hopf algebra when $G$ is an infinite group. However, if we consider the more restrictive functor of regular functions on affine sets (spaces) $\Oo:{\sf Aff}\to \Mm_k$, then we obtain again a strong monoidal functor. As the algebra structure is obtained from the unique coalgebra structure on the affine set given by the diagonal map, this ``explains'' why regular functions on an affine space gives rise to a commutative algebra. Moreover if $G$ is an affine group, then $\Oo(G)$ is a Hopf algebra that is commutative as algebra. By deforming the multiplicative structure of these algebras, one can construct non-commutative non-cocommutative Hopf algebras that are called quantum groups, see e.g. \cite{kassel}. (Often, quantum groups are rather considered from the point of view of differential geometry than from algebraic geometry, considering Lie groups rather than affine groups.)
\item
Let $\Mm_k^{\ZZ_2}$ be the category of $\ZZ_2$-graded $k$-modules. This is a symmetric monoidal category with a symmetry given by 
$$\sigma_{X,Y}:X\ot Y\to Y\ot X,\ \sigma_{X,Y}(x\ot y)=(-1)^{|x||y|}y\ot x,$$
where $|\cdot |$ denotes the degree of an element. A Hopf algebra in this category is a Hopf superalgebra.
\end{enumerate}
\end{examples}

\subsection{Bimonads and Hopf monads}

The word  bimonad (and Hopf monad) is (are) in use for different notions that are all meant to generalize bialgebras (or Hopf algebras) in the setting of monads.
The idea of a bimonad in the sense of \cite{BLV} and \cite{Moe} (in the last cited paper the author  
uses, somewhat misleading, the term `Hopf monad' for what is called a bimonad 
in the first cited paper and also below) is to transfer the notion of a bialgebra from the setting of braided monoidal categories to usual monoidal categories, similar to the case of usual monads, that made it possible to transfer the notion of an algebra from the setting of monoidal categories to arbitrary categories. The notion of a bimonad in the sense of Mesablishvili and Wisbauer \cite{MesWis} has the ambition to go even a step further and wants to free the notion of a bialgebra even of the monoidal structure. The price to pay is that reconstruction theorems are not as easily obtained in this last setting, but some other Hopf-algebraic features, such as the fundamental theorem arise more naturally. So both approaches have there advantages and disadvantages. Because this paper is greatly motivated by the reconstruction theorems, we will advocate the approach of Brugui\`eres-Lack-Virelizier and Moerdijk.

A {\em bimonad} $\BB=(\BB,M,U)$ on a monoidal category $\Cc$ is an op-monoidal monad, that is $\BB$ is a monad on $\Cc$ such that the underlying functor $\BB$ is an op-monoidal functor and the natural transformations $M:\BB\BB\to\BB$ and $U:id_\Cc\to \BB$ are opmonoidal natural transformations. In particular, $\BB$ is endowed with a natural transformation $\phi^\BB_{X,Y}:\BB(X\ot Y)\to \BB X\ot \BB Y$ and a morphism $\phi:\BB I\to I$ satisfying suitable compatibility conditions (see e.g. \cite[Definition 2.4]{BLV}, we will omit the technicalities in this review). 
A {\em bicomonad} is defined as a monoidal comonad.

The {\em right fusion operator} of a bimonad $\BB$ is defined as the natural transformation
$$\can_{X,Y}^r=(M_X\ot \BB Y)\circ \phi^\BB_{\BB X,Y}  : \BB(\BB X\ot Y)\to \BB X\ot \BB Y $$
Similarly one defines a left fusion operator $\can^\ell:\BB(1_\Cc\ot \BB)\to \BB\ot \BB$.

\index{Hopf monad}
A right (resp. left) {\em Hopf monad} is a bimonad such that its right (resp. left) fusion operator is a natural isomorphism. A Hopf monad is at the same time a left and right Hopf monad.

\begin{example}\exlabel{bialgebrabimonad}
Any bialgebra $(B,m,u,\Delta,\epsilon)$ in a braided monoidal category $\Cc$ induces bimonad $\BB=-\ot B$ on $\Cc$. Let us just give the formula for the structural natural transformation $\phi^\BB$ and the morphism $\phi$,
\begin{eqnarray*}
\phi^\BB_{X,Y}&=&(X\ot \gamma_{Y,B}\ot B)\circ X\ot Y\ot \Delta:(X\ot Y)\ot B\to (X\ot B)\ot (Y\ot B);\\
\phi&=&\epsilon:I\ot B\cong B\to I.
\end{eqnarray*}
Moreover, any Hopf algebra $H$ induces right Hopf monad $\HH=-\ot H$ on $\Cc$. In this situation, the right fusion operator is given explicitly by 
\begin{eqnarray*}
&&\can^{\HH,r}_{X,Y}=(X\ot \gamma_{Y,H}\ot H)\circ (X\ot Y\ot \can)\circ (X\ot \gamma_{Y,H}^{-1}\ot H):\\
&&\hspace{6cm}X\ot H \ot Y\ot H\to X\ot H\ot Y\ot H
\end{eqnarray*}
where $\can$ is the canonical map that appears in \thref{fundamental}. In a similar way, the left fusion operator is in correspondence with the morphism $\can\circ\gamma_{H,H}$, where $\gamma_{H,H}$ denotes the braiding on the Hopf algebra $H$. If the antipode is invertible, then $\HH$ is also a left Hopf monad, hence a Hopf monad.
\end{example}

Recall that an object $G\in \Cc$ is called a generator if and only if the functor $\JVHom_\Cc(G,-):\Cc\to \Set$ is fully faithful. If the category $\Cc$ has coproducts, this is furthermore equivalent with the fact that for any object $X\in \Cc$ there is a canonical epimorphism $f_X:H=\coprod_{f:G\to X} G \to X$, where the coproduct takes over a number of copies of $G$. Therefore, we find a fork 
\begin{equation}\eqlabel{coequalizergenerator}\xymatrix{\coprod_{^{(g,h):G\to H,\ st}_{f_X\circ g=f_X\circ h}} G \ar@<.5ex>[rr]^-{g_X} \ar@<-.5ex>[rr]_-{h_X} &&\coprod_{f:G\to X} G \ar[rr]^{f_X} &&  X}\end{equation}
In general this diagram is not a coequalizer, but $G$ is called a {\em regular} generator if \equref{coequalizergenerator} is a coequalizer for every $X\in \Cc$, see e.g. \cite[page 81]{Kelly}. If we work moreover in a category where the endofunctors $-\ot X$ and $X\ot -$ preserve colimits, it is not hard to proof that for any two objects $X$ and $Y$, one obtains a canonical epimorpfism of the form $f_X\ot f_Y:\coprod G\ot G\to X\ot Y$, hence two morphisms $h,h':X\ot Y\to Z$ are identical in $\Cc$ if and only if $h\circ (f\ot g)=h'\circ (f\ot g)$ for all $f:G\to X$ and $g:G\to Y$.

We now state a first simple `reconstruction-type' theorem.

\begin{theorem}\thlabel{pretannakabialgebra}
Let $\Cc$ be a cocomplete braided monoidal category such that $I$ is a regular generator of $\Cc$ and that $-\ot X$ and $X\ot -$ preserve colimits in $\Cc$, for all $X\in \Aa$. Let $(B,m,u)$ be an algebra in $\Cc$ and $(\BB,M,U)$ the associated monad on $\Cc$. Then there is a bijective correspondence between
\begin{enumerate}[(i)]
\item bialgebra structures on $B$ (in $\Cc$) with underlying algebra $(B,m,u)$;
\item bimonad structures on $\BB$ (on $\Cc$) with underlying monad $(\BB,M,U)$.
\end{enumerate}
Moreover, under the above correspondence, there is furthermore a bijective correspondence between antipodes turning $B$ into a Hopf algebra and inverses of the right fusion operator turning $\BB$ into a right Hopf monad. Moreover, there is a bijective correspondence between invertible antipodes on $B$ and inverses for both the left and right fusion operators turing $\BB$ into a Hopf monad.
\end{theorem}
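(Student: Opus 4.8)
The plan is to exploit the fact that, under the stated hypotheses on $\Cc$, the passage from the algebra object $B$ to the monad $\BB = -\ot B$ is fully faithful on the relevant structure maps, so that each piece of bialgebra data on $B$ determines, and is determined by, a corresponding piece of bimonad data on $\BB$. The key technical input is the last paragraph before the theorem: since $I$ is a regular generator and the tensor functors preserve colimits, a morphism out of a tensor product $X\ot Y$ is determined by its composites with $f\ot g$ for $f\colon I\to X$, $g\colon I\to Y$; equivalently, natural transformations between the functors $-\ot B^{\ot k}$ are in bijection with $\Cc$-morphisms $B^{\ot k}\to B^{\ot l}$ obtained by evaluating at $I$. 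This ``evaluation at $I$'' functor is what translates the two sides of the correspondence into one another.

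First I would make precise the dictionary: a comultiplication $\Delta\colon B\to B\ot B$ yields the opmonoidal structure $\phi^\BB_{X,Y}=(X\ot\gamma_{Y,B}\ot B)\circ(X\ot Y\ot\Delta)$ and $\phi=\epsilon$ exactly as in \exref{bialgebrabimonad}; conversely, given opmonoidal data $(\phi^\BB,\phi)$ on $\BB$, I recover $\Delta$ and $\epsilon$ by evaluating at $X=Y=I$ (using $\ell,r$ to identify $I\ot B$ with $B$). I would then check, using the generator argument, that these two assignments are mutually inverse, and that the bialgebra axioms \equref{bialgebra} on $(B,m,u,\Delta,\epsilon)$ are equivalent, term by term, to the opmonoidality of the monad multiplication $M=-\ot m$ and unit $U$ together with the compatibility of $\phi^\BB,\phi$ with the associativity and unit constraints. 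Naturality in $X,Y$ on the monad side is automatic once the map is built from fixed $\Cc$-morphisms, so no information is lost. This establishes the first bijection (i)$\leftrightarrow$(ii), which is essentially already recorded in the statement of \thref{pretannakabialgebra} and can be cited.

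For the antipode part, I would compute the right fusion operator $\can^{\BB,r}_{X,Y}=(M_X\ot\BB Y)\circ\phi^\BB_{\BB X,Y}$ for the induced bimonad and show, via the same evaluation-at-$I$ principle, that it is a natural isomorphism if and only if the single $\Cc$-morphism $\can=(m\ot H)\circ(H\ot\Delta)\colon B\ot B\to B\ot B$ is a $\Cc$-isomorphism (the conjugation by braidings $\gamma_{Y,H}$ in the displayed formula in \exref{bialgebrabimonad} is invertible, so it does not affect the issue). Indeed, $\can^{\BB,r}$ is built from $\can$ by tensoring and conjugating, and the generator hypothesis lets me conclude that invertibility of the natural transformation is equivalent to invertibility of $\can$ itself. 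By \thref{fundamental}, the latter is equivalent to $B$ admitting an antipode, and I would observe that the antipode, when it exists, is unique (being the convolution inverse of $\id_B$), so ``possessing an antipode'' and ``possessing the inverse fusion operator'' are not merely equivalent conditions but carry the same data, giving a genuine bijection. The left-handed statement is symmetric, using the left fusion operator and $\can\circ\gamma_{H,H}$; and since the antipode is invertible (with respect to composition) precisely when $B$ is a Hopf algebra in the reversed-braiding category as well, invertible antipodes correspond to having both fusion operators invertible, i.e.\ to $\BB$ being a Hopf monad.

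**The main obstacle** I anticipate is not conceptual but bookkeeping: verifying that each bialgebra axiom matches the correct opmonoidal-monad axiom requires carefully inserting the coherence isomorphisms $a,\ell,r$ and the braidings $\gamma$ in the right places, and then invoking the generator argument to reduce a statement about natural transformations on $\Cc$ to a statement about morphisms in $\Cc$. The subtle point worth spelling out is \emph{why} the evaluation-at-$I$ map is injective on the natural transformations that arise — this is exactly where ``$I$ is a regular generator'' and ``$-\ot X$, $X\ot-$ preserve colimits'' are used, through the coequalizer \equref{coequalizergenerator} and the induced epimorphism $f_X\ot f_Y$. Beyond that, the proof is a routine translation, and for most of the diagram chases I would simply refer the reader to \cite{BLV} and \cite{Moe}, where the bimonad formalism is developed in full.
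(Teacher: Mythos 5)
Your proposal is correct and follows essentially the same route as the paper's own proof: the dictionary $\Delta\leftrightarrow\phi^\BB_{I,I}$, $\epsilon\leftrightarrow\phi$ combined with the regular-generator/evaluation-at-$I$ argument for the bijection, and \thref{fundamental} together with the identification of the fusion operator with $\can$ (up to conjugation by braidings) for the antipode statements. The extra points you flag — uniqueness of the antipode, and the matching of the bialgebra axioms with the opmonoidality of $M$ and $U$ — are worthwhile elaborations of steps the paper leaves implicit, but they do not change the argument.
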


\begin{proof}
We already know from \exref{bialgebrabimonad} that a bialgebra $B$ induces a bimonad $\BB=-\ot B$. Conversely, if the monad $\BB=-\ot B$ is a bimonad, then one can easily verify that the morphisms $\Delta=\phi^\BB_{I,I}:B\cong (I\ot I)\ot B\to (I\ot B)\ot (I\ot B)\cong B\ot B$ and $\epsilon=\phi: B\cong I\ot B\to I$ define a comultiplication and counit on $B$ that turn $B$ into a bialgebra. 

Let us prove that these constructions give a bijective correspondence. For any two objects $X, Y\in \Cc$, and any two morphisms $f:I\to X$ and $g:I\to Y$, we find by naturality of $\phi^\BB$
\begin{eqnarray*}
\phi^\BB_{X,Y}\circ (f\ot g\ot B)&=&(f\ot B\ot g\ot B)\circ \phi^\BB_{I,I}
\end{eqnarray*}
Moreover, by the properties of $I$ as monoidal unit in $\Cc$, we can consider $\phi^\BB_{I,I}:B\to B\ot B$ and it follows furthermore that 
$$(f\ot B\ot g\ot B)\circ \phi^\BB_{I,I}=(X\ot \gamma_{Y,B}\ot B)\circ (X\ot Y\ot \phi^\BB_{I,I})\circ (f\ot g\ot B)$$
As this holds for all morphisms $f,g$ and $I$ is a regular generator, we obtain that (see remark above)
$$\phi^{\BB}_{X,Y}=(X\ot \gamma_{Y,B}\ot B)\circ (X\ot Y\ot \phi^\BB_{I,I}).$$
In the same way, one proofs that $\epsilon=\phi$.

For the last statement, we know from \thref{fundamental} that antipodes on the bialgebra $B$ are in correspondence with the bijectivity of the canonical map $\can:B\ot B\to B\ot B$. Furthermore, as mentioned in \exref{bialgebrabimonad}, the fusionoperator is up to natural isomorphism completely determined by the canonical map. Hence the fusion operator is invertible if and only if the canonical map is invertible if and only if $B$ has an antipode. Finally, it known that both $\can$ and $\can\circ\gamma_{H,H}$ are bijective if and only if the antipode of a Hopf algebra is invertible.
\end{proof}

\begin{remark}
In Hopf algebra theory (over a base field), one considers sometimes Hopf algebras with a one-sided antipode. That is, a {\em right} antipode for a bialgebra $H$, is a morphism $S:H\to H$ that is a {\em right} inverse of the identity with relation to the convolution product. A right Hopf algebra is then a bialgebra that possesses a right antipode. Left antipodes and left Hopf algebras are defined in the same way. Let us remark that this terminology does not fully correspond to the case of Hopf monads. As we have seen, a {\em right} Hopf monad is a bimonad, such that the {\em right} fusion operator is a natural isomorphism. By the above theorem, if the monad $\HH$ is of the form $ -\ot H$ for an algebra $H$, then right Hopf monad structures on $\HH$ correspond exactly to bialgebra structures on $H$ with a usual antipode, that is an ordinary Hopf algebra structures on $H$ (not just right). Similarly, a left Hopf monad of the form $H\ot -$ corresponds as well to a Hopf algebra structure on $H$. So $H\ot -$ is a left Hopf monad if and only if $-\ot H$ is a right Hopf monad if and only if $H$ is a Hopf algebra.
In a similar way as \thref{pretannakabialgebra}, one proves that a (left and right) Hopf monad of the form $-\ot H$ corresponds exactly to a Hopf algebra $H$ with invertible antipode.
\end{remark}

\begin{remark}
As mentioned above, Mesablishvili and Wisbauer \cite{MesWis} introduced an alternative notion of bimonad and Hopf-monad. In their work a bimonad on a (not necessarily monoidal) category $\Cc$ consists of a monad $(\BB,m,u)$ on $\Cc$ such that the underlying functor is endowed at the same time with the structure of a comonad $(\BB,\Delta,\epsilon)$ on $\Cc$, and there is a mixed distributive law $\lambda:\BB\BB\to \BB\BB$ relating the monad and comonad structure. This setting allows to consider a suitable kind of Hopf modules $\Cc^\BB_\BB$ along with a comparison functor $\Cc\to \Cc^\BB_\BB$, similar to the functor \equref{comparisionHopf}. An antipode can now be introduced similarly as in the case of usual Hopf algebras as a natural transformation $S:\BB\to \BB$ satisfying $m*(SH)*\Delta=u*\epsilon=m*(HS)*\Delta$ (where $*$ is the Godement product). \thref{fundamental} has a very natural generalization in this setting. (A version of the fundamental theorem also exists in the setting of Hopf monads by Brugui\`eres, Lack and Virelizier.)
\end{remark}

\section{Reconstruction theorems}\selabel{Tannaka}

\subsection{Monoidal structure on the representation categories of bimonads and Hopf monads}
\selabel{simplemonad}

As mentioned earlier, it is classically known that bialgebras can be characterised as those algebras whose category of modules is a monoidal category with a strict monoidal forgetful functor. The monadic version of this theorem is of folkloristic knowledge. For a sligthly different formulation of the following theorem, see \cite[Theorem 7.1]{Moe} or \cite[section 2.4]{Szl:monoidalbialgebroid}.

\begin{theorem}
\thlabel{Tannakabimonad}
Let $(\BB,M,U)$ be a monad on a monoidal category $\Cc$. Then there is a bijective correspondence between 
\begin{enumerate}[(i)]
\item bimonad structures on the functor $\BB$ with underlying monad $(\BB,M,U)$; and 
\item monoidal structures on the Eilenberg-Moore category of the monad $(\BB,M,U)$ such that the forgetful functor to $\Cc$ 
is a strict monoidal functor.
\end{enumerate}
\end{theorem}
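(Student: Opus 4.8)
The plan is to exhibit both directions of the bijection explicitly and then check the two constructions are mutually inverse, systematically reducing every coherence verification to the universal property of free $\BB$-algebras together with the faithfulness and strictness of the forgetful functor $G\colon\Cc_\BB\to\Cc$.

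For the direction (i)$\Rightarrow$(ii), given an op-monoidal structure $(\phi^\BB,\phi)$ on $\BB$, I would define a tensor product on $\Cc_\BB$ by $(X,\rho_X)\boxtimes(Y,\rho_Y)=(X\ot Y,\,(\rho_X\ot\rho_Y)\circ\phi^\BB_{X,Y})$, unit object $(I,\phi)$, and declare the constraints $a,\ell,r$ of $\Cc$ to serve as those of $\Cc_\BB$. The checks here are routine diagram chases: that $(\rho_X\ot\rho_Y)\circ\phi^\BB_{X,Y}$ is a $\BB$-action uses naturality of $\phi^\BB$, opmonoidality of $M$, and associativity/unitality of $\rho_X,\rho_Y$ (together with $\phi^\BB_{X,Y}\circ U_{X\ot Y}=U_X\ot U_Y$); that $(I,\phi)$ is a $\BB$-algebra uses $\phi\circ U_I=id_I$ and $\phi\circ M_I=\phi\circ\BB\phi$; that $a_{X,Y,Z},\ell_X,r_X$ lift to $\Cc_\BB$-morphisms is exactly the coherence (associativity and unit) axioms for $(\phi^\BB,\phi)$; and the pentagon and triangle for $\boxtimes$ then hold because $G$ is faithful and strict monoidal. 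By construction $G$ is strict monoidal.

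For (ii)$\Rightarrow$(i), suppose $\Cc_\BB$ carries a monoidal structure $\boxtimes$ making $G$ strict monoidal; strictness forces $(X,\rho_X)\boxtimes(Y,\rho_Y)=(X\ot Y,\bar\rho_{X,Y})$ for some action $\bar\rho_{X,Y}$ and forces the unit object to be of the form $(I,\phi)$. Let $F\dashv G$ be the free/forgetful adjunction, with unit the monad unit $U$ and counit $\varepsilon$, and recall that the underlying $\Cc$-morphism of $\varepsilon_{(A,\rho_A)}\colon FA\to(A,\rho_A)$ is $\rho_A$. The universal property of $FX$ provides a unique $\BB$-algebra morphism $\tilde\phi_{X,Y}\colon F(X\ot Y)\to FX\boxtimes FY$ corresponding under the adjunction to $U_X\ot U_Y\colon X\ot Y\to\BB X\ot\BB Y=G(FX\boxtimes FY)$, and I would set $\phi^\BB_{X,Y}=G(\tilde\phi_{X,Y})$, with $\phi$ as above. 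Naturality of $\phi^\BB$ and the identities $\phi^\BB_{X,Y}\circ U_{X\ot Y}=U_X\ot U_Y$, $\phi\circ U_I=id_I$ follow directly. The coherence axioms for $(\phi^\BB,\phi)$ would all be obtained by the same device: an equation between composites of the candidate structure maps follows once it is known after composing on the right with the relevant $U_{(-)}$, since both sides are then underlying morphisms of $\BB$-algebra maps out of a free algebra, which are determined by that restriction. For instance, opmonoidal associativity follows by checking that $(\phi^\BB_{X,Y}\ot\BB Z)\circ\phi^\BB_{X\ot Y,Z}\circ\BB a$ and $a_{\BB X,\BB Y,\BB Z}\circ(\BB X\ot\phi^\BB_{Y,Z})\circ\phi^\BB_{X,Y\ot Z}$ both underlie $\BB$-algebra morphisms $F(X\ot(Y\ot Z))\to(FX\boxtimes FY)\boxtimes FZ$ (assembled from $\tilde\phi$, $Fa$, $\boxtimes$ applied to $\tilde\phi$, and the associativity constraint of $\Cc_\BB$, whose underlying morphism is $a_{\BB X,\BB Y,\BB Z}$ by strictness) with common restriction $a_{\BB X,\BB Y,\BB Z}\circ(U_X\ot(U_Y\ot U_Z))$; the unit coherences are analogous.

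The step I expect to be the real obstacle is opmonoidality of the multiplication $M$, i.e.\ $(M_X\ot M_Y)\circ\phi^\BB_{\BB X,\BB Y}\circ\BB\phi^\BB_{X,Y}=\phi^\BB_{X,Y}\circ M_{X\ot Y}$ and its unit analogue. Since $\tilde\phi_{X,Y}$ is a $\BB$-algebra morphism, the right-hand side equals $\sigma_{X,Y}\circ\BB\phi^\BB_{X,Y}$, where $\sigma_{X,Y}$ is the $\BB$-action on $FX\boxtimes FY$, so the crux is to identify this action as $\sigma_{X,Y}=(M_X\ot M_Y)\circ\phi^\BB_{\BB X,\BB Y}$ — i.e.\ to compute the action on the tensor product of two free algebras, which is itself not free. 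I would do this by writing $\sigma_{X,Y}$ as the underlying morphism of $\varepsilon_{FX\boxtimes FY}\colon F(\BB X\ot\BB Y)\to FX\boxtimes FY$ and showing $\varepsilon_{FX\boxtimes FY}=(\varepsilon_{FX}\boxtimes\varepsilon_{FY})\circ\tilde\phi_{\BB X,\BB Y}$ via the free-algebra device once more (both sides restrict to $id_{\BB X\ot\BB Y}$, using the triangle identities and the monad axiom $M_X\circ U_{\BB X}=id_{\BB X}$); passing to underlying morphisms yields exactly $\sigma_{X,Y}=(M_X\ot M_Y)\circ\phi^\BB_{\BB X,\BB Y}$. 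Finally, for mutual inverseness: starting from $(\phi^\BB,\phi)$, building $\boxtimes$ and re-extracting returns $\phi$ (from the unit object $(I,\phi)$) and, using naturality of $\phi^\BB$ and $M_X\circ\BB(U_X)=id_{\BB X}$, returns $\phi^\BB$; conversely, starting from a monoidal $\Cc_\BB$ with $G$ strict, the rebuilt action on $(X,\rho_X)\boxtimes(Y,\rho_Y)$ is $(\rho_X\ot\rho_Y)\circ\phi^\BB_{X,Y}$, which coincides with the original $\bar\rho_{X,Y}$ because $(\varepsilon_{(X,\rho_X)}\boxtimes\varepsilon_{(Y,\rho_Y)})\circ\tilde\phi_{X,Y}$ and $\varepsilon_{(X\ot Y,\bar\rho_{X,Y})}$ are $\BB$-algebra maps out of $F(X\ot Y)$ with common restriction $id_{X\ot Y}$, and the unit object and constraints agree since a monoidal structure with $G$ strict monoidal is determined by its tensor bifunctor and unit object ($G$ being faithful).
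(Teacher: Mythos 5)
Your proof is correct and, in outline, complete: lifting the constraints of $\Cc$ along the op-monoidal structure in one direction, and in the other direction extracting $\phi^\BB_{X,Y}$ from the tensor product of free algebras via the universal property of $F\dashv G$ --- including the key identification of the $\BB$-action on $FX\boxtimes FY$ as $(M_X\ot M_Y)\circ\phi^\BB_{\BB X,\BB Y}$, which is indeed the crux of the op-monoidality of $M$ --- is exactly the standard argument. The paper itself offers no proof of \thref{Tannakabimonad} (it is stated as folklore, with references to Moerdijk and Szlach\'anyi), and your argument matches the one found in those sources.
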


The following theorem follows now by a duality argument.

\begin{theorem}
Let $(\BB,M,U)$ be a monad on a monoidal category $\Cc$. Then there is a bijective correspondence between 
\begin{enumerate}[(i)]
\item stuctures of a monoidal monad on the functor $\BB$ with underlying monad $(\BB,M,U)$; and
\item monoidal structures on the Kleisli category of the monad $(\BB,M,U)$ such that the forgetful functor to $\Cc$ is a strict monoidal functor. 
\end{enumerate}
\end{theorem}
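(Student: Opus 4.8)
The plan is to treat this as the Kleisli counterpart of \thref{Tannakabimonad}. The Kleisli category $\mathrm{Kl}(\BB)$ is the second canonical resolution of the monad --- the one whose free functor $F_\BB\colon\Cc\to\mathrm{Kl}(\BB)$ is the left adjoint and is the identity on objects --- and in the desired bijection it plays the role that the Eilenberg--Moore category plays in \thref{Tannakabimonad}, with the Eilenberg--Moore forgetful functor replaced by $F_\BB$ and with ``op-monoidal'' replaced throughout by ``(lax) monoidal''. I would establish the correspondence by writing down the two constructions explicitly and checking that they are mutually inverse, and then remark how the purely formal route goes.

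From (i) to (ii): given a monoidal structure $(\phi_0\colon I\to\BB I,\ \phi_{X,Y}\colon\BB X\ot\BB Y\to\BB(X\ot Y))$ on the monad, equip $\mathrm{Kl}(\BB)$ with the monoidal product that is $\ot$ on objects, has unit $I$, and sends a pair of Kleisli morphisms $f\colon X\to\BB X'$ and $g\colon Y\to\BB Y'$ to $\phi_{X',Y'}\circ(f\ot g)\colon X\ot Y\to\BB(X'\ot Y')$, with the associativity and unit constraints taken to be the images under $F_\BB$ of those of $\Cc$. The things to check are that this defines a functor --- preservation of Kleisli identities is exactly the statement that $U$ is a monoidal natural transformation, and preservation of Kleisli composition is exactly the statement that $M$ is a monoidal natural transformation --- and that the constraints are coherent, which reduces, via Mac Lane coherence, to the associativity and unit axioms for the lax monoidal functor $\BB$. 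The functor $F_\BB$ is then strict monoidal by construction.

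From (ii) to (i): given a monoidal structure on $\mathrm{Kl}(\BB)$ for which $F_\BB$ is strict monoidal, the fact that $F_\BB$ is bijective on objects forces the tensor of objects to be $\ot$ and the unit to be $I$. Put $\phi_0:=\eta_I$ and let $\phi_{X,Y}\colon\BB X\ot\BB Y\to\BB(X\ot Y)$ be the $\Cc$-morphism underlying $\varepsilon_X\ot\varepsilon_Y$, where $\varepsilon_X\colon\BB X\to X$ is the Kleisli-adjunction counit (the Kleisli morphism $\mathrm{id}_{\BB X}$); then unwind the pentagon and triangle of $\mathrm{Kl}(\BB)$, together with naturality of $\varepsilon$ and $\eta$ and the interchange law, into the monoidal-monad axioms for $(\phi_0,\phi)$. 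Equivalently and more quickly: $F_\BB$ strict monoidal is in particular op-monoidal, so by the adjunction theorem in \seref{Prel} its right adjoint $U_\BB$ is (lax) monoidal, whence $\BB=U_\BB\circ F_\BB$ inherits a monoidal structure, and one verifies that $M$, $U$ are monoidal for it and that it coincides with the explicit $(\phi_0,\phi)$.

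The only real obstacle I anticipate is the bookkeeping needed to see that these two assignments are mutually inverse and that each axiom on one side matches an axiom on the other. Passing from $(\phi_0,\phi)$ to the Kleisli monoidal structure and back is a direct computation. For the other composite, the crucial observation is that every morphism of $\mathrm{Kl}(\BB)$ equals $\varepsilon\circ F_\BB(h)$ for a suitable $\Cc$-morphism $h$, so that functoriality of the tensor on $\mathrm{Kl}(\BB)$ and strict monoidality of $F_\BB$ determine the whole monoidal structure from the morphisms $\varepsilon_X\ot\varepsilon_Y$, hence from $\phi$; so two such structures inducing the same $\phi$ agree. Translating the coherence axioms of $\mathrm{Kl}(\BB)$ into the monoidal-monad axioms, and keeping track in the non-strict setting of where the associators and unitors of $\Cc$ intervene, is where all the care goes --- nothing deep, but it must be done attentively. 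A reader who prefers the formal argument may instead run the proof of \thref{Tannakabimonad} in the opposite $2$-category, in which the Eilenberg--Moore object of a monad becomes its Kleisli object; the one subtlety there is exactly the matching-up of monoidal data carried out above.
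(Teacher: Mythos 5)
Your proposal is correct, but it is considerably more explicit than what the paper does: the paper offers no proof at all beyond the single remark that the statement ``follows now by a duality argument'' from \thref{Tannakabimonad}, i.e.\ precisely the formal route you sketch in your last sentences (run the Eilenberg--Moore argument in the opposite $2$-category, where the Eilenberg--Moore object of a monad becomes its Kleisli object and ``op-monoidal'' trades places with ``lax monoidal'' via the mate correspondence). Your main argument instead builds the bijection by hand, and it does so correctly: the tensor $f\ot_{\mathrm{Kl}}g=\phi_{X',Y'}\circ(f\ot g)$ is the standard monoidal structure on the Kleisli category of a monoidal monad, functoriality is exactly monoidality of $M$ and $U$, and in the converse direction the decomposition of every Kleisli morphism as $\varepsilon\circ F_\BB(h)$ is exactly the right observation to see that the whole tensor is determined by $\phi_{X,Y}=$ (underlying morphism of) $\varepsilon_X\ot\varepsilon_Y$, so that the two assignments are mutually inverse. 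A genuine service your version performs that the paper's one-liner does not is to pin down what ``the forgetful functor to $\Cc$'' must mean here: the right adjoint $X\mapsto\BB X$ cannot be strict monoidal in general, so the statement only makes sense for the identity-on-objects free functor $F_\BB\colon\Cc\to\mathrm{Kl}(\BB)$, which is what the duality with \thref{Tannakabimonad} produces. In short, the paper buys brevity at the cost of leaving the dualization (and this reinterpretation of the functor) implicit; your explicit construction costs bookkeeping but yields a self-contained and checkable proof.
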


As will become clear when we proceed, to characterize Hopf algebras, we need rigidity conditions on our underlying category. A first theorem that already fully characterises Hopf monads in a very beautiful and general way is the following theorem, that is a reformulation of {\cite[Theorem 3.6]{BLV}}, where we only ask for a closed monoidal category (such as a module category). Although it is not as strong as the original Tannaka theorem from the reconstruction point of view, it explains very nicely what the internal meaning is of Hopf monads, and therefore Hopf algebras, in terms of monoidal categories.

\begin{theorem}[{\cite[Theorem 3.6]{BLV}}]
\thlabel{TannakaHopfmonad}
Let $(\BB,M,U)$ be a monad on a right closed monoidal category $\Cc$. Then there is a bijective correspondence between 
\begin{enumerate}[(i)]
\item right Hopf monad structures on $\BB$ with underlying monad $(\BB,M,U)$; and
\item right closed monoidal structures on the Eilenberg-Moore category of the monad $(\BB,M,U)$ such that the forgetful functor to $\Cc$ is right closed and strict monoidal.
\end{enumerate}
\end{theorem}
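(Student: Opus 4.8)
The plan is to upgrade \thref{Tannakabimonad} by tracking what the extra ``right closed'' structure costs on both sides. First I would recall the bijection of \thref{Tannakabimonad}: a bimonad structure on $\BB$ is the same as a monoidal structure on $\Cc_\BB$ making the forgetful functor $U^\BB:\Cc_\BB\to\Cc$ strict monoidal. So the only thing to establish is that, under this bijection, the \emph{right Hopf monad} condition (invertibility of the right fusion operator $\can^r_{X,Y}=(M_X\ot\BB Y)\circ\phi^\BB_{\BB X,Y}$) on $\BB$ corresponds exactly to $\Cc_\BB$ being right closed with $U^\BB$ a right closed functor. The key is to realize the internal-hom in $\Cc_\BB$ explicitly. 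Given two $\BB$-modules $(X,\rho_X)$ and $(Y,\rho_Y)$, one wants $[X,Y]$ in $\Cc_\BB$ to have underlying object $[X,Y]_\Cc$ (the internal hom computed in $\Cc$, which exists since $\Cc$ is right closed) — this is forced by $U^\BB$ being right closed — together with a $\BB$-action. The fusion operator is precisely the ingredient that allows one to transport/adjoin the $\BB$-action to the internal hom: using $\can^r$ one builds, by adjunction, a morphism $\BB[X,Y]_\Cc\to[X,Y]_\Cc$, and the axioms of a bimonad plus invertibility of $\can^r$ make this an associative, unital action whose formation is right adjoint to $-\ot(X,\rho_X)$ in $\Cc_\BB$.

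Concretely, the main steps I would carry out are: (1) Assume $\BB$ is a right Hopf monad. Define the $\BB$-module structure on $[X,Y]_\Cc$ via the mate (under the tensor-hom adjunction in $\Cc$) of a composite built from $(\can^r)^{-1}$, $\phi^\BB$, the evaluation $\ev:[X,Y]_\Cc\ot X\to Y$, and $\rho_Y$. Check it is an action: associativity uses $M$ being op-monoidal and the hexagon-type compatibility of $\can^r$ with $M$ (this is where $(\can^r)^{-1}$ being a genuine two-sided inverse is used), unitality uses $U$ op-monoidal. (2) Verify the adjunction $\Cc_\BB(Z\ot X, Y)\cong\Cc_\BB(Z,[X,Y])$ by restricting the $\Cc$-adjunction and checking a map is a $\BB$-module morphism on one side iff its mate is on the other — here naturality of $\phi^\BB$ and the definition of the tensor product of $\BB$-modules in $\Cc_\BB$ (inherited from \thref{Tannakabimonad}) do the bookkeeping. (3) Conversely, assume $\Cc_\BB$ is right closed with $U^\BB$ right closed and strict monoidal. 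Then $U^\BB[X,Y]=[U^\BB X,U^\BB Y]_\Cc$, and from the $\BB$-action on $[X,-]$ applied cleverly — essentially to the free module $\BB Y$ and the object $\BB X$ — one extracts an inverse to $\can^r_{X,Y}$; showing it is inverse on both sides reuses the adjunction identities (triangle equations) for both the closed structure and the EM-adjunction. (4) Check the two constructions are mutually inverse, which is formal once the internal hom is pinned down by the requirement that $U^\BB$ be right closed.

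The hard part will be step (1)–(3), i.e.\ the precise manipulation of the fusion operator: writing $\can^r$ and $(\can^r)^{-1}$ in terms of $\phi^\BB$, $M$, and $U$, and checking that the candidate $\BB$-action on the internal hom satisfies associativity. This is exactly the content of the calculation in \cite[Theorem 3.6]{BLV} and is best done either with string diagrams or by a careful mate-calculus argument; the coherence reduction (treating $\Cc$ as strict) helps but does not trivialize it, since the op-monoidal structure $\phi^\BB$ is genuinely non-strict. Everything else — the adjunction isomorphism, naturality, and the inverse-construction in the converse direction — is a routine but lengthy diagram chase that I would delegate to the reference. Since this is a survey, I would in fact only sketch the construction of the internal hom via $(\can^r)^{-1}$, state that the verifications are in \cite{BLV}, and emphasize the conceptual point: the fusion operator \emph{is} the data that lifts the closed structure of $\Cc$ to $\Cc_\BB$, just as the comultiplication lifts the monoidal structure.
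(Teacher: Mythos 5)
Your plan is correct and is essentially the argument of \cite[Theorem 3.6]{BLV}, which is also all the paper itself offers here: the theorem is stated with that citation and no proof is reproduced, so your sketch (construct the $\BB$-action on $[X,Y]_\Cc$ as the mate of a composite through $(\can^r)^{-1}$, verify the adjunction, and in the converse direction extract the inverse of the fusion operator from the closed structure on free modules) matches the intended proof. One small convention slip: with the paper's definition of right closed ($X\ot -$ has right adjoint $[X,-]$), the evaluation is a map $X\ot [X,Y]_\Cc\to Y$, not $[X,Y]_\Cc\ot X\to Y$; this does not affect the substance of your argument.
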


In the classical theory of Hopf algebras, rather than considering a closed structure on the category of (right) $H$-modules over a Hopf algebra $H$, one often uses the antipode to put a right $H$-module structure on the dual space $M^*$ of any right $H$-module $M$ as follows
$$f\cdot h=f(e_i\cdot S(h))f_i, $$
where $f\in M^*$, $h\in H$ and $\{e_i,f_i\}\in M\times M^*$ is a finite dual basis. This property follows now directly from \thref{TannakaHopfmonad}.

\begin{corollary}
\colabel{TannakaHopfmonadif}
Let $\Cc^f$ be the right rigid full subcategory of the closed monoidal category $\Cc$. If $\BB$ is a Hopf monad on $\Cc$, then there is a right rigid full subcategory $\Cc^f_\BB$ of the EM-category of $\BB$ such that the forgetful functor restricts and corestricts to a functor $U^f:\Cc^f_\BB\to \Cc^f$ that is strict monoidal (and hence right rigid).
\end{corollary}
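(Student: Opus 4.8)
The plan is to derive this from \thref{TannakaHopfmonad}. A Hopf monad is in particular a right Hopf monad, so by that theorem the Eilenberg--Moore category $\Cc_\BB$ carries a right closed monoidal structure for which the forgetful functor $U\colon\Cc_\BB\to\Cc$ is strict monoidal and right closed. Being strict, hence strong, monoidal, $U$ preserves right duals: applying $U$ to the coevaluation, the evaluation and the triangle identities of a right rigid object of $\Cc_\BB$ produces the same data in $\Cc$. I would therefore take $\Cc^f_\BB$ to be the full subcategory of right rigid objects of $\Cc_\BB$; then by the previous remark $U$ restricts and corestricts to a functor $U^f\colon\Cc^f_\BB\to\Cc^f$, which is again strict monoidal since $\Cc^f_\BB$ is closed under $\ot$ (a tensor product of right rigid objects is right rigid, with $(X\ot Y)^*\simeq Y^*\ot X^*$) and contains the unit (which is right rigid).

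It remains to see that $\Cc^f_\BB$ is genuinely a right rigid monoidal category. The key non-formal ingredient is that $U$ also \emph{reflects} right rigidity: if $UM$ is right rigid in $\Cc$, then $M$ is right rigid in $\Cc_\BB$. Indeed, since $\Cc_\BB$ is right closed and $U$ is right closed and strict monoidal, $U$ identifies $U[M,N]\simeq[UM,UN]$ and carries the canonical comparison $[M,I]\ot N\to[M,N]$ in $\Cc_\BB$ onto the canonical comparison $(UM)^*\ot UN\to[UM,UN]$ in $\Cc$; the latter is invertible for every $N$ because $UM$ is right rigid, and $U$ is conservative (the $\Cc$-inverse of a $\BB$-linear isomorphism is automatically $\BB$-linear), so the comparison in $\Cc_\BB$ is invertible for every $N$, which by the standard criterion exhibits $M$ as right rigid with right dual $[M,I]$. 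Combining this with the preservation of right duals noted above, $\Cc^f_\BB$ is precisely the preimage $U^{-1}(\Cc^f)$; since $\Cc^f$ is a right rigid monoidal category — in particular closed under $\ot$, closed under $(-)^*$, and containing the unit — and $U$ is strict monoidal and preserves right duals, the preimage $\Cc^f_\BB$ inherits all three properties and is therefore a right rigid monoidal subcategory of $\Cc_\BB$. Finally $U^f$, being strict, a fortiori strong, monoidal between right rigid monoidal categories, is automatically right rigid.

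The main obstacle is precisely the reflection of right rigidity, and it is there that the Hopf (as opposed to merely bimonad) hypothesis on $\BB$ is used, via \thref{TannakaHopfmonad}, to guarantee that $\Cc_\BB$ is right closed; everything else is formal. Once that closed structure and the conservativity of $U$ are in hand, the remaining work is the triangle-identity bookkeeping hidden in ``the standard criterion'', which I would not carry out here. An alternative and essentially equivalent route avoids the closed structure altogether and instead imports directly from Brugui\`eres--Lack--Virelizier the explicit right dual of a $\BB$-module, constructed from the inverse of the right fusion operator; but this is the content already packaged into \thref{TannakaHopfmonad}.
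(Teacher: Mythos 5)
Your proof is correct and follows essentially the same route as the paper: both invoke \thref{TannakaHopfmonad} to obtain the right closed structure on $\Cc_\BB$ with $U$ right closed, identify $\Cc^f_\BB$ with the preimage (pullback) of $\Cc^f$ under $U$, and exhibit the right dual of $(X,\rho)$ as $[(X,\rho),I]_\BB$, whose underlying object is $X^*$. The only difference is one of detail: the paper asserts tersely that this internal hom furnishes a genuine dual, whereas you spell out the reflection of rigidity via the canonical comparison morphism and the conservativity of $U$ — a welcome elaboration, not a different argument.
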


\begin{proof}
One can construct the category $\Cc^f_\BB$ as the pullback of the functor $U:\Cc_\BB\to \Cc$ and the embedding functor $\Cc^f\hookrightarrow \Cc$, as in the following diagram.
\[
\xymatrix{
\Cc_\BB \ar[rr]^{U} && \Cc\\
\Cc_\BB^f \ar@{^(->}[u] \ar[rr]^{U^f} && \Cc^f \ar@{^(->}[u]
}
\]
Explicitly, $\Cc^f_\BB$ consists of all objects $(X,\rho)\in \Cc_\BB$ such that $U(X,\rho)=X$ is rigid. 
We denote $[X,-]$ for the right adjoint of $X\ot -:\Cc\to \Cc$ and $[(X,\rho),-]_\BB$ for the right adjoint of $(X,\rho)\ot -:\Cc_\BB\to \Cc_\BB$. Then we have,
$U\circ [(X,\rho),-]_\BB=[X,-]\simeq X^*\ot -$. Moreover, $[(X,\rho),I]_\BB\in\Cc_\BB$, so $X^*$ can be endowed with a $\BB$-module structure $\rho^*$, and we find that $(X,\rho)$ is right rigid with dual $(X^*,\rho^*)$. 
\end{proof}

As \thref{Tannakabimonad} and \thref{TannakaHopfmonad} are ``if and only if'' theorems, where \coref{TannakaHopfmonadif} only works in one direction, the natural question arises whether the statement of \coref{TannakaHopfmonadif} also has an inverse, that is: ``Can we reconstruct a Hopf monad structure on $\BB$ by only knowing the rigid monoidal structure of $\Cc^f_\BB$''. The first problem is that in general, the monad $\BB$ is not a functor defined on $\Cc^f$. If one thinks about (Hopf) monads that arise from ordinary $k$-(Hopf) algebras, this is only the case if the underlying space of the (Hopf) algebra is itself finite dimensional. In this case we can apply \cite[Theorem 3.10]{BLV} and find 

\begin{theorem}\thlabel{Tannakafinitedim}
Let $\Cc$ be a rigid monoidal category and $(\BB,M,U)$ a monad on $\Cc$. There is a bijective correspondence between
\begin{enumerate}[(i)]
\item Hopf monad structures on $\BB$ with underlying monad $(\BB,M,U)$; and
\item rigid monoidal structures on the Eilenberg-Moore category of the monad $(\BB,M,U)$ such that the forgetful functor to $\Cc$ is strict monoidal 
(and hence rigid).
\end{enumerate}
\end{theorem}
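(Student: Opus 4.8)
The plan is to deduce this from the two ``if and only if'' theorems already available, \thref{Tannakabimonad} and \thref{TannakaHopfmonad}, together with \coref{TannakaHopfmonadif}; the glue is the elementary observation that a rigid monoidal category is automatically left and right closed with $[X,-]\simeq X^*\ot -$, and that a strict monoidal functor between rigid monoidal categories automatically preserves duals, hence is left and right closed.

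For the direction (i)$\Rightarrow$(ii), I would start from a Hopf monad structure on $\BB$. Since a Hopf monad is in particular a bimonad, \thref{Tannakabimonad} already gives a monoidal structure on $\Cc_\BB$ with strict monoidal forgetful functor $U$; what remains is rigidity of $\Cc_\BB$. As $\Cc$ is rigid it is in particular right closed, and $\BB$ is a right Hopf monad, so the construction in the proof of \coref{TannakaHopfmonadif} (applied with $\Cc^f=\Cc$) equips the underlying dual $X^*$ of any $(X,\rho)\in\Cc_\BB$ with a $\BB$-action $\rho^*$ via the internal hom $[(X,\rho),I]_\BB$ and exhibits $(X^*,\rho^*)$ as a right dual. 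Running the mirror-image argument, using that $\Cc$ is left closed and $\BB$ is a left Hopf monad, gives left duals, so $\Cc_\BB$ is rigid.

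For (ii)$\Rightarrow$(i), I would start from a rigid monoidal structure on $\Cc_\BB$ with $U$ strict monoidal. By \thref{Tannakabimonad} this is the same as a bimonad structure on $\BB$; to see it is a Hopf monad I must check both fusion operators are invertible. Being rigid, $\Cc_\BB$ is right closed with $[(X,\rho),-]_\BB\simeq(X,\rho)^*\ot-$, and $\Cc$ is right closed; moreover $U$, being strict monoidal between rigid categories, preserves duals and hence satisfies $U[(X,\rho),(Y,\sigma)]_\BB\simeq (UX)^*\ot UY\simeq[UX,UY]$, i.e.\ $U$ is right closed. So the hypotheses of the (ii)$\Rightarrow$(i) direction of \thref{TannakaHopfmonad} hold and produce an inverse for the right fusion operator; the left-handed version of the same theorem produces an inverse for the left fusion operator. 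Hence $\BB$ is a Hopf monad.

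Finally, for bijectivity I would note that rigidity is a \emph{property} of a monoidal category (duals, when they exist, are unique up to canonical isomorphism) and that being a Hopf monad is a \emph{property} of a bimonad (invertibility of the fusion operators, which are already part of the bimonad data); so the bijection of \thref{Tannakabimonad} between bimonad structures on $\BB$ and monoidal structures on $\Cc_\BB$ with strict monoidal $U$ restricts to a bijection between the sub-collections cut out above, compatibly with the constructions, since those of \thref{Tannakabimonad} and \thref{TannakaHopfmonad} are mutually inverse. The main obstacle, beyond bookkeeping, is the two points flagged at the start --- that a strong (a fortiori strict) monoidal functor between rigid categories preserves duals and is therefore closed, which is what licenses feeding the rigid data into \thref{TannakaHopfmonad}, and the need to carry out each argument twice, once on the right and once on the left, since \thref{TannakaHopfmonad} and \coref{TannakaHopfmonadif} are phrased only on the right.
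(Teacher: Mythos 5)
Your proposal is correct. For comparison: the paper does not prove this theorem at all --- it presents it as a reformulation of \cite[Theorem 3.10]{BLV} and simply cites that result, whereas you derive it internally from \thref{Tannakabimonad}, \thref{TannakaHopfmonad} and \coref{TannakaHopfmonadif}. Your route is essentially the argument that underlies the cited theorem in \cite{BLV}: a rigid category is closed with $[X,-]\simeq X^*\ot-$, a strict (or strong) monoidal functor between rigid categories preserves duals and is therefore closed, so the closed-monoidal characterization of \thref{TannakaHopfmonad} can be fed the rigid data in both directions, and the left-handed statements follow by the mirror argument. The two points you flag are exactly the right ones to flag; the only place where your sketch leans on something the paper itself states rather tersely is the step inside \coref{TannakaHopfmonadif} asserting that $[(X,\rho),I]_\BB$ really is a right dual of $(X,\rho)$ (i.e.\ that the evaluation and coevaluation of $X$ in $\Cc$ lift to $\Cc_\BB$, or equivalently that $[(X,\rho),-]_\BB\simeq [(X,\rho),I]_\BB\ot-$); in a fully written-out proof this deserves a line of justification, but it is standard and is precisely what the corollary claims. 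Your observation that both ``rigid'' and ``Hopf'' are properties rather than extra structure, so that the bijection of \thref{Tannakabimonad} restricts, is the clean way to get the bijectivity and is a point the paper leaves implicit.
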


\subsection{Simple reconstruction of bialgebras and Hopf algebras}
\selabel{simplealgebra}

In this section, we will apply the results of the previous section on the case where the bimonad is obtained from a bialgebra. We give a reconstruction-type theorem for bialgebras and Hopf algebras that are not yet of the type of Tannaka reconstruction theorem. That is, we do not reconstruct a the whole algebra from only pieces of finite dimensional information, rather we reconstruct the coalgebraic structure on a given algebra from the monoidal structure on its category of representations. 
This is however the right step up towards a full Tannaka reconstruction theorem, as it fully characterizes bialgebras and Hopf algebras, as one sees from \thref{Reconstrbialg} and \thref{ReconstrHopfalg} respectively.

\begin{theorem}\thlabel{Reconstrbialg}
Let $B=(B,m_B,u_B)$ be an algebra in a braided monoidal category $\Aa=(\Aa,\ot,I,\gamma)$. Suppose that $I$ is a regular generator of $\Aa$ and that $-\ot X$ and $X\ot -$ preserve colimits in $\Aa$, for all $X\in \Aa$. 
There is a bijective correspondence between
\begin{enumerate}[(1)]
\item monoidal structures on $\Aa_B$ such that the forgetful functor
$\Aa_B \to \Aa$ is strict monoidal, and
\item bialgebra structures $(B,m_B,u_B,\Delta_B,\varepsilon_B)$ on $B$.
\end{enumerate}
\end{theorem}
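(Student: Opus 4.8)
The plan is to deduce \thref{Reconstrbialg} by combining \thref{pretannakabialgebra} with \thref{Tannakabimonad}, using the observation (recorded at the end of the section on monads) that when $B$ is an algebra in $\Aa$, the Eilenberg--Moore category $\Aa_\BB$ of the associated monad $\BB=-\ot B$ is precisely $\Aa_B$, the category of right $B$-modules, and that under this identification the Eilenberg--Moore forgetful functor $\Aa_\BB\to\Aa$ is exactly the forgetful functor $\Aa_B\to\Aa$. So the composite correspondence I want to set up is
\[
\{\text{monoidal structures on }\Aa_B\text{ with strict monoidal forgetful functor}\}
\;\longleftrightarrow\;
\{\text{bimonad structures on }\BB=-\ot B\}
\;\longleftrightarrow\;
\{\text{bialgebra structures on }(B,m_B,u_B)\}.
\]

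First I would invoke \thref{Tannakabimonad} for the monad $(\BB,M,U)=(-\ot B,-\ot m_B, U)$ on the monoidal category $\Aa$: this gives a bijection between bimonad structures on $\BB$ with underlying monad $(\BB,M,U)$ and monoidal structures on $\Aa_\BB$ making the forgetful functor strict monoidal. Rewriting $\Aa_\BB=\Aa_B$ via the identification above turns the right-hand side of this bijection into item (1) of the theorem. Second I would invoke \thref{pretannakabialgebra}, whose hypotheses are exactly those assumed here ($\Aa$ cocomplete braided monoidal, $I$ a regular generator, and $-\ot X$, $X\ot-$ colimit-preserving), to obtain a bijection between bimonad structures on $\BB$ with underlying monad $(\BB,M,U)$ and bialgebra structures on $B$ extending $(B,m_B,u_B)$. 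Composing the two bijections yields the desired correspondence between (1) and (2).

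The one point that needs a little care — and which I expect to be the main (though modest) obstacle — is checking that the composite bijection is the ``expected'' one and in particular is independent of the route taken, i.e. that the monoidal structure on $\Aa_B$ attached to a bialgebra $(B,m_B,u_B,\Delta_B,\varepsilon_B)$ via this composite is the usual one (tensor product of $B$-modules $M\ot N$ with diagonal action through $\Delta_B$, unit object $I$ with action through $\varepsilon_B$, using the braiding $\gamma$ to move the tensor factors past each other). This amounts to unwinding the explicit formulas: the bimonad datum $\phi^\BB_{X,Y}\colon (X\ot Y)\ot B\to(X\ot B)\ot(Y\ot B)$ produced from $\Delta_B$ in \exref{bialgebrabimonad} is $(X\ot\gamma_{Y,B}\ot B)\circ(X\ot Y\ot\Delta_B)$ with $\phi=\varepsilon_B$, and one checks directly that the monoidal structure \thref{Tannakabimonad} builds on $\Aa_\BB$ out of this datum is, under $\Aa_\BB=\Aa_B$, exactly the diagonal tensor product; conversely one reads off $\Delta_B=\phi^\BB_{I,I}$ and $\varepsilon_B=\phi$ as in the proof of \thref{pretannakabialgebra}. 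These are the routine computations I would not grind through in detail, pointing instead to \exref{bialgebrabimonad} and to the proofs of \thref{Tannakabimonad} and \thref{pretannakabialgebra}; the bijectivity in both directions is already guaranteed by those two theorems, so no further argument is needed beyond this identification of the structures.
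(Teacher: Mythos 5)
Your proposal is correct and follows exactly the paper's own route: the printed proof of \thref{Reconstrbialg} is literally ``Follows from \thref{pretannakabialgebra} and \thref{Tannakabimonad}'', i.e. the same composition of the two bijections through bimonad structures on $\BB=-\ot B$, with the identification $\Aa_\BB=\Aa_B$ left implicit. Your additional remarks about checking that the composite produces the expected diagonal tensor product are a reasonable (and harmless) elaboration of what the paper takes for granted.
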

\begin{proof}
Follows from \thref{pretannakabialgebra} and \thref{Tannakabimonad}
\end{proof}

\begin{theorem}\thlabel{ReconstrHopfalg}
Let $B=(B,m_B,u_B)$ be an algebra in a closed braided monoidal category $\Aa=(\Aa,\ot,I,\gamma)$. Suppose that $I$ is a regular generator of $\Aa$ and that $-\ot X$ and $X\ot -$ preserve colimits in $\Aa$, for all $X\in \Aa$. 
There is a bijective correspondence between
\begin{enumerate}[(1)]
\item right closed monoidal structures on $\Aa_B$ such that the forgetful functor
$\Aa_B \to \Aa$ is strict monoidal and right closed, and
\item Hopf algebra structures $(B,m_B,u_B,\Delta_B,\varepsilon_B)$ on $B$.
\end{enumerate}
\end{theorem}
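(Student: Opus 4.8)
The plan is to deduce \thref{ReconstrHopfalg} by combining the bialgebra reconstruction of \thref{Reconstrbialg} with the Hopf-monad characterisation of \thref{TannakaHopfmonad}, exactly as \thref{Reconstrbialg} itself was deduced from \thref{pretannakabialgebra} and \thref{Tannakabimonad}. First I would recall that under our hypotheses on $\Aa$ (namely that $I$ is a regular generator and the tensor functors preserve colimits), the monad $\BB = -\ot B$ on $\Aa$ has Eilenberg--Moore category $\Aa_\BB$ canonically isomorphic to $\Aa_B$, the category of right $B$-modules, compatibly with the forgetful functors to $\Aa$. Since $\Aa$ is moreover right closed, \thref{TannakaHopfmonad} applies and gives a bijective correspondence between right Hopf monad structures on $\BB$ (with underlying monad $-\ot B$) and right closed monoidal structures on $\Aa_B$ for which the forgetful functor $\Aa_B\to\Aa$ is right closed and strict monoidal. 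This is precisely item (1) of the theorem expressed on the monad side.

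Next I would invoke \thref{pretannakabialgebra}: under the same hypotheses on $\Aa$, bimonad structures on $-\ot B$ correspond bijectively to bialgebra structures on $(B,m_B,u_B)$, and under this correspondence inverses of the right fusion operator (i.e.\ right Hopf monad structures on $-\ot B$) correspond bijectively to antipodes making $B$ a Hopf algebra. Composing the two bijections — right closed monoidal structures on $\Aa_B$ with strict monoidal right closed forgetful functor $\;\longleftrightarrow\;$ right Hopf monad structures on $-\ot B$ $\;\longleftrightarrow\;$ Hopf algebra structures on $B$ extending $(B,m_B,u_B)$ — yields the asserted bijection between (1) and (2). I would also remark that a right closed monoidal structure on $\Aa_B$ whose forgetful functor is strict monoidal and right closed in particular underlies an ordinary monoidal structure with strict monoidal forgetful functor, so that the bialgebra structure produced here is the same as the one produced by \thref{Reconstrbialg}, and the Hopf datum is just the extra antipode; this makes clear that the theorem genuinely refines \thref{Reconstrbialg}.

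The one point that requires a little care — and which I expect to be the main obstacle — is the matching of the \emph{closed} (rather than merely monoidal) structures under the monad/algebra dictionary. One must check that the right internal-hom functors $[X,-]$ on $\Aa$ and $[(X,\rho),-]_\BB$ on $\Aa_\BB$ built in the proof of \coref{TannakaHopfmonadif} are, when transported along $\Aa_\BB\cong\Aa_B$, the same as the classical internal-homs on the module category $\Aa_B$ coming from the Hopf algebra structure (the $B$-action on $[X,-]$ twisted by the antipode, as in the displayed formula $f\cdot h = f(e_i\cdot S(h))f_i$ preceding \coref{TannakaHopfmonadif}). This is essentially a bookkeeping verification: one writes out the right adjoint of $(X,\rho)\ot-$ on $\Aa_B$, uses that $\Aa$ is right closed with $[X,-]$, and identifies the induced $B$-action via $\can^{-1}$ (equivalently via $S$) with the module structure $\rho^*$ of \coref{TannakaHopfmonadif}; the regular-generator hypothesis again lets one reduce the relevant identities to morphisms out of $I$. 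Since all of this is routine once the framework is in place, I would state it briefly and refer to \cite[Theorem 3.6 and Theorem 3.10]{BLV} for the details rather than reproduce the computation.
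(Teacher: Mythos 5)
Your proposal is correct and follows exactly the paper's own (one-line) proof, which deduces the theorem by composing \thref{pretannakabialgebra} with \thref{TannakaHopfmonad}. The extra care you take in matching the closed structures under the monad/algebra dictionary is a reasonable elaboration of details the paper leaves implicit.
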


\begin{proof}
Follows from \thref{pretannakabialgebra} and \thref{TannakaHopfmonad}.
\end{proof}

Remark that the last two Theorems are in particular applicable to the case $\Cc=\Mm_k$, i.e.\ they characterize  classical bialgebras and Hopf algebras. 

\subsection{Tannaka reconstruction of bialgebras and Hopf algebras}
\selabel{realTannaka}

We will not discuss the Tannaka reconstruction theorem in terms of Hopf monads or bimonads, of which an explicit description is not known to the author (although we can refer the interested reader to the recent \cite{BooStr} for an even more general approach, see also \seref{prospects}).
In recent years, several variations and generalizations of the theorem in different settings have been formulated, see e.g. \cite{JoyStr:Tannaka}, \cite{Ma:book}, \cite{McC:Tannaka}, \cite{Schap}, \cite{Schap2}, \cite{Sch:Tannaka} and \cite{Szl:Tannaka}. 

Our formulation 
is taken from \cite{McC:Tannaka}. Let $\Cc$ be a complete braided monoidal category such that all endofunctors of the form $-\ot X$ and $X\ot -$ for $X\in \Cc$ preserve limits. The category ${\sf strmon}\swarrow \Cc$ is defined as the category whose objects are pairs $(\Vv,F)$, where $\Vv$ is a monoidal category and $F:\Vv\to\Cc$ is a strict monoidal functor $F:\Vv\to \Cc$ such that $FX$ is right rigid for any object $X\in \Vv$. A morphism $H:(\Vv,F)\to (\Ww,G)$ in this category is a strict monoidal functor $H:\Vv\to \Ww$ such that $F=G\circ H$.
The category ${\sf strmon}^*\swarrow \Cc$ is the subcategory of ${\sf strmon}\swarrow \Cc$ that consists of functors $F:\Vv\to \Cc$, where $\Vv$ is a right rigid category. 

Let $\Cc$ be a braided monoidal category.
As we know from \thref{Reconstrbialg} every bialgebra $B$ in $\Cc$ induces a strict monoidal functor $U_B:\Cc_B\to \Cc$. If $B$ is moreover a Hopf algebra, then we have by \coref{TannakaHopfmonadif} also functor $U^f_B:\Cc_B^f\to \Cc^f$. Therefore, we find functors $\tilde U:\Bialg(\Cc)\to {\sf strmon}\swarrow \Cc,\ \tilde U(B)=(\Cc_B,U_B)$ and $\tilde U^*:\Hpfalg(\Cc)\to {\sf strmon}^*\swarrow \Cc,\ \tilde U^*(B)=(\Cc_B^f,U^f_B)$. The Tannaka reconstruction allows in first place a left adjoint for these functors.

\begin{theorem}\thlabel{tannakaadjunction}
Let $\Cc$ be a complete braided monoidal category such that all endofunctors of the form $-\ot X$ and $X\ot -$ for $X\in \Cc$ preserve limits.
\begin{enumerate}[(i)]
\item The functor $\tilde U:\Bialg(\Cc)\to {\sf strmon}\swarrow \Cc$ has a left adjoint ${\sf tan}$.\\
\item The functor $\tilde U^*:\Hpfalg(\Cc)\to {\sf strmon}^*\swarrow \Cc$ has a left adjoint ${\sf tan}^*$.\\
\end{enumerate}
\end{theorem}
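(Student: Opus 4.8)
The plan is to construct the left adjoints ${\sf tan}$ and ${\sf tan}^*$ explicitly as (weighted) limits in $\Cc$, following the classical coend/limit description of Tannaka reconstruction, and then verify the universal property. First I would treat part (i). Given an object $(\Vv,F)$ of ${\sf strmon}\swarrow\Cc$, I would form, for each object $X\in\Vv$, the right dual $(FX)^\ast$ in $\Cc$ (which exists by hypothesis on $F$), and consider the functor $\Vv\times\Vv^{\mathrm{op}}\to\Cc$ sending $(X,Y)\mapsto FY\ot (FX)^\ast$ together with the diagram built from the morphisms of $\Vv$. I would then define the underlying object of ${\sf tan}(\Vv,F)$ to be the end
\[
{\sf tan}(\Vv,F)=\int_{X\in\Vv} FX\ot (FX)^\ast,
\]
which exists because $\Cc$ is complete and the functors $-\ot X$, $X\ot-$ preserve limits (so the end is computed pointwise and the requisite limit diagrams are well-behaved). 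The multiplication, unit, comultiplication and counit are induced from the monoidal structure of $\Vv$, the strict monoidal structure of $F$, and the evaluation/coevaluation maps of the duals $(FX)^\ast$; here one crucially uses that $F$ is \emph{strict} monoidal so that $F(X\ot Y)=FX\ot FY$ on the nose, which makes the structure maps land in the correct ends without coherence corrections. This exhibits ${\sf tan}(\Vv,F)$ as a bialgebra in $\Cc$, i.e.\ an object of $\Bialg(\Cc)$.

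Next I would establish the adjunction $\mathrm{Hom}_{\Bialg(\Cc)}({\sf tan}(\Vv,F),B)\cong\mathrm{Hom}_{{\sf strmon}\swarrow\Cc}((\Vv,F),\tilde U(B))$. Unwinding the right-hand side: a morphism in ${\sf strmon}\swarrow\Cc$ is a strict monoidal functor $H:\Vv\to\Cc_B$ with $U_B\circ H=F$, i.e.\ a lift of $F$ through the forgetful functor $U_B:\Cc_B\to\Cc$; concretely this is a coherent family of $B$-actions $\rho_X:FX\ot B\to FX$ (one for each $X\in\Vv$), natural in $X$ and compatible with the monoidal structure. By the universal property of the end and the duality adjunction $\mathrm{Hom}(FX\ot B,FX)\cong\mathrm{Hom}(B,FX\ot (FX)^\ast)$ (using that $(FX)^\ast$ is the right dual and that $-\ot B$ preserves the relevant limits), such a family corresponds exactly to a single morphism $B\to{\sf tan}(\Vv,F)$; the naturality and monoidal-compatibility conditions translate precisely into the statement that this morphism is a morphism of bialgebras. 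Verifying that the translation respects \emph{all} of the algebra and coalgebra axioms on both sides is the bookkeeping core of the argument, and can be organised by checking the conditions of Theorem \ref{th:Reconstrbialg} (which already tells us that $\tilde U$ sends bialgebras to such lifted monoidal functors).

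For part (ii), given $(\Vv,F)\in{\sf strmon}^\ast\swarrow\Cc$ with $\Vv$ now \emph{right rigid}, I would form ${\sf tan}^\ast(\Vv,F)$ by the same end formula and argue that it carries an antipode: the right rigidity of $\Vv$ supplies, for each $X\in\Vv$, a candidate inverse to $\can$ assembled from the duality data of $X^\vee$ in $\Vv$ and its image under $F$, so that by the characterisation of Hopf algebras in Theorem \ref{th:fundamental} (bijectivity of $\can$) the reconstructed bialgebra is in fact a Hopf algebra. The adjunction $\mathrm{Hom}_{\Hpfalg(\Cc)}({\sf tan}^\ast(\Vv,F),B)\cong\mathrm{Hom}_{{\sf strmon}^\ast\swarrow\Cc}((\Vv,F),\tilde U^\ast(B))$ then follows from part (i) restricted along the full inclusions $\Hpfalg(\Cc)\hookrightarrow\Bialg(\Cc)$ and ${\sf strmon}^\ast\swarrow\Cc\hookrightarrow{\sf strmon}\swarrow\Cc$, once one checks that a strict monoidal lift of $F$ through $U^f_B:\Cc^f_B\to\Cc^f$ amounts to the same data as a lift through $U_B$ when $\Vv$ is rigid; this is essentially Corollary \ref{co:TannakaHopfmonadif} together with the observation that rigid objects of $\Cc_B$ are detected on underlying objects.

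The main obstacle I anticipate is \emph{existence of the reconstruction object}, i.e.\ showing the end $\int_{X\in\Vv}FX\ot(FX)^\ast$ exists and has the right size: $\Vv$ is an arbitrary (possibly large) monoidal category, so one genuinely needs the completeness of $\Cc$ together with the limit-preservation of the tensor functors, and one must be careful that the diagram indexing the end is legitimately small (or that $\Cc$ admits limits of the relevant shape); this is exactly where the hypotheses on $\Cc$ are consumed. A secondary, purely technical, difficulty is the verification that the induced maps satisfy the bialgebra (and Hopf) axioms — manageable but lengthy — for which I would lean on Theorems \ref{th:Reconstrbialg} and \ref{th:fundamental} rather than checking everything by hand.
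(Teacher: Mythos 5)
Your approach coincides with the paper's: the paper gives no proof of this theorem beyond the remark that ${\sf tan}$ and ${\sf tan}^*$ arise from the end construction $\int_{X\in\Vv}[FX,FX]\simeq\int_{X\in\Vv}FX\ot (FX)^*$, referring to Street and McCrudden for details, and your proposal is essentially an expansion of that sketch; in particular you correctly locate where completeness of $\Cc$ and limit-preservation of $-\ot X$ and $X\ot -$ are consumed (existence and size of the end), and your treatment of the antipode in part (ii) via rigidity of $\Vv$ and \thref{fundamental} is the standard one. There is, however, one concrete point to repair: your own computation produces, from a lift of $F$ through $U_B$, a morphism $B\to{\sf tan}(\Vv,F)$, i.e.\ it establishes $\mathrm{Hom}_{\Bialg(\Cc)}(B,{\sf tan}(\Vv,F))\cong\mathrm{Hom}_{{\sf strmon}\swarrow\Cc}((\Vv,F),\tilde U(B))$, which is not the isomorphism $\mathrm{Hom}_{\Bialg(\Cc)}({\sf tan}(\Vv,F),B)\cong\cdots$ that you announce at the outset. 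This mismatch is forced by the fact that an end is a limit, so universal maps go \emph{into} it (the ``maps out'' version lives in the dual coend/comodule picture), and it is consistent with the variance of $\tilde U$ itself: a bialgebra morphism $B\to B'$ induces the restriction functor $\Cc_{B'}\to\Cc_B$ over $\Cc$, so $\tilde U$ is naturally a functor on $\Bialg(\Cc)^{\mathrm{op}}$ and the adjunction must be read with that variance (or recast with coends and corepresentation categories). Once the hom-set isomorphism is stated in the correct direction, the rest of your argument goes through as in the cited sources.
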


The existence of the functors ${\sf tan}$ and ${\sf tan}^*$ is based on the so-called {\em end}-construction, which is in fact a particular limit, see e.g. \cite{Kelly}. This construction allows to build up an algebra out of it's category of representations, or dually a coalgebra out of it's category of corepresentations (in the latter case this coalgebra is sometimes called the {\em coendomorphism coalgebra}, or {\em coend} for short). As in our situation, the representation categories posses an additional monoidal structure, the reconstructed algebra will inherit an additional structure as well, leading to a bialgebra or Hopf algebra. We refer for a full proof of \thref{tannakaadjunction} to e.g.\ \cite[Section 16]{Str:QG} or \cite[Section 6]{McC:Tannaka}.

The next question that arises, is whether the above theorem completely determines bialgebras and Hopf algebras. That is, when the functors $\tilde U$, $\tilde U^*$ or their adjoints are fully faithful. In particular, if $H$ is a bialgebra (or a Hopf algebra), one can wonder if $H$ is isomorphic to the reconstructed algebra ${\sf tan} \tilde U H$, this is the so-called {\bf reconstruction problem}. 
A second problem is termed {\bf recognition problem} and refers to the fact whether the pair $\tilde U {\sf tan}(\Vv,F)$ is isomorphic to $(\Vv,F)$ in ${\sf strmon}\swarrow \Cc$, that is, whether the functor $F$ is essentially unique.

It turns out that in many of the cases of interest, such as when $\Cc=\vect(k)$, the category of vector spaces over a fixed field $k$, both problems have a positive answer, leading in particular to \thref{TannakaHopfalg}. Generalizations to a general categorical setting often become highly technical, and we omit them explicitly here. Let us briefly summarize some results.
\begin{itemize}
\item B.\ Day \cite{Day} solved both problems 
for finitely presentable, complete and cocomplete symmetric monoidal closed categories for which the full subcategory of objects with duals is closed under finite limits and colimits. 
\item P.\ McCrudden \cite{McC:Masckean}
proved the reconstruction problem for so-called Maschkean categories, which are 
certain abelian monoidal categories in which all monomorphisms split 
\item Probably the most general (symmetric) setting can be found in \cite{Schap}, where the author deals with complete and cocomplete symmetric monoidal and closed categories (called cosmoi). 
\end{itemize}

\section{Varations on the notion of Hopf algebra}\selabel{Variations}

As we have seen, the reconstruction theorems (\thref{TannakaHopfalg} and \thref{ReconstrHopfalg}) fully characterize Hopf algebras over a field. By varying the base category or the properties of the forgetful functor, we will recover in this section different variations on the notion of a classical Hopf algebra, that were defined over the last decades. 

\subsection{Variations on the properties of the forgetful functor}

\subsubsection{Quasi Hopf algebras}

Let $(\Cc,\ot,I)$ and $(\Dd,\odot,J)$ be monoidal categories, we will say that a functor $F:\Cc\to\Dd$ is a quasi-monoidal functor if there is a natural isomorphism $\psi_{X,Y}:F(X)\odot F(Y)\cong F(X\ot Y)$ and a $\Cc$-isomorphism $\psi_0:F(I)\cong J$ (without any further conditions). Clearly any strong monoidal functor is a quasi-monoidal functor. 
Then we can introduce quasi bialgebras by postulating the following characterization:
\begin{quote}
Let $(H,m,u)$ be a $k$-algebra. Then there is a bijective correspondence between
\begin{enumerate}[(i)]
\item {\em quasi bialgebra} structures on $H$ with underlying algebra $(H,m,u)$; and
\item monoidal structures on the category of right $H$-modules $\Mm_H$ such that the forgetful functor $U:\Mm_H\to \Mm_k$ is a quasi monoidal functor.
\end{enumerate}
\end{quote}

Clearly, every (usual) $k$-bialgebra is a quasi bialgebra, but the converse is not true. The main difference with usual bialgebras is that the comultiplication in a quasi bialgebra is not necessarily coassociative. Quasi bialgebras were introduced by Drinfel'd in 1989 \cite{Drin:QH} using the following more explicit description. Let $H$ be a $k$-algebra with an invertible element $\Phi\in H\ot H\ot H$, and endowed with a comultiplication $\Delta:H\to H\ot H$ and a counit $\epsilon: H\to k$ satisfing the following conditions for all $a \in H$
\begin{eqnarray*}
    (H \otimes \Delta) \circ \Delta(a) &=& \Phi \lbrack (\Delta \otimes H) \circ \Delta (a) \rbrack \Phi^{-1},  \\
    (\varepsilon \otimes H) \circ \Delta = &H& = (H \otimes \varepsilon) \circ \Delta.
\end{eqnarray*}
Furthermore, $\Phi$  has to be a normalized 3-cocycle, in the sense that
\begin{eqnarray*}
    [ (H \otimes H \otimes \Delta)(\Phi) ] \ [ (\Delta \otimes H \otimes H)(\Phi) ] 
    &=& (1 \otimes \Phi) \ \lbrack (H \otimes \Delta \otimes H)(\Phi) \rbrack \ (\Phi \otimes 1) \\
    (H \otimes \varepsilon \otimes H)(\Phi) &=& 1 \otimes 1. 
\end{eqnarray*}
Then $H$ is a quasi bialgebra. The correspondence with the characterization above, follows by the fact that the associativity constraint in the monoidal category of right $H$-modules $\Mm_H$ over a quasi bialgebra $H$ can be constructed as
$$a_{X,Y,Z}:X\ot (Y\ot Z)\to (X\ot Y)\ot Z,\ a_{X,Y,Z}(x\ot (y\ot z))=((x\ot y)\ot z)\cdot \Phi$$
Conversely, if $\Mm_H$ is monoidal, then we recover $\Phi=\alpha_{H,H,H}(1\ot 1\ot 1)$, which satisfies the conditions of a normalized $3$-cocycle as a consequence of the constraints in a monoidal category.

If moreover, there exist elements $\alpha,\beta\in H$ and an anti-algebra morphism $S:H\to H$ such that 
\begin{eqnarray*}
S(a_{(1)})\alpha a_{(2)} = \epsilon(a)\alpha; && a_{(1)}\beta S(a_{(2)})=\epsilon(a)\beta,
\end{eqnarray*}
for all $a\in H$ and
\begin{eqnarray*}
X^1 \beta S(X^1) \alpha X^3 = &1& =S(x^1) \alpha x^2 \beta S(x^3). 
\end{eqnarray*}
where we have denoted $\Phi = X^1 \otimes X^2 \otimes X^3$ and $\Phi^{-1}= x^1 \otimes x^2 \otimes x^3$, 
then $H$ is called a quasi Hopf algebra. By means of Tannaka reconstruction, one then finds the following characterization.

\begin{quote}
\index{Hopf algebra!quasi-}
There is a bijective correspondence between:
\begin{enumerate}[(i)]
\item quasi Hopf $k$-Hopf algebras $H$; and
\item right rigid monoidal categories $\Vv$ together with a right rigid quasi monoidal functor $\Vv\to \Mm_k$.
\end{enumerate}
\end{quote}

Usual group algebras give rise to usual Hopf algebras, similarly examples of quasi Hopf algebras can be constructed by deforming group algebras with normalized 3-cocycles on this group in classical sense. As quasi Hopf algebras are in bijective correspondence with certain classes of monoidal categories, these results can be used to compute all possible monoidal structures on certain (small) categories, this is for example done in \cite{BulCaeTor:Klein}, see also references therein.

In contrast to usual bialgebras and Hopf algebras, which are self-dual objects in a braided monoidal category, the quasi-version is not self-dual. Nevertheless, it is possible to define ``dual quasi bialgebras'' or ``co-quasi bialgebras'' (and Hopf algebras). In this dual setting, the objects are usual coalgebras, but posses a non-associative algebra structure, that is governed by a ``reassociator'' $\phi\in\JVHom(H\ot H\ot H,k)$. 
More precisely, a co-quasi bialgebra is a coalgebra $H$ such that its category of comodules $\Mm^H$ is monoidal and the forgetful functor to $k$-modules is quasi monoidal.
Hence one can consider algebras in the category $\Mm^H$, called $H$-comodule algebras. 
Let $A$ be an $H$-comodule algebra. 
Then $A$ is a right $H$-comodule equipped with $H$-comodule morphisms $\mu:A\ot A\to A, \mu(a\ot b)=a\cdot b$ and $\eta: k\to A$. However, the triple $(A,\mu,\eta)$ is not an associative $k$-algebra. In contrast, $A$ satisfies the following quasi-associativity condition
$$(a\cdot b)\cdot c=a_{[0]}\cdot (b_{[0]}\cdot c_{[0]})\phi(a_{[1]},b_{[1]},c_{[1]}).$$
It should be remarked that different from the classical case, $H$ with regular multiplication is not an $H$-comodule algebra. Since the forgetful functor $U:\Mm^H\to \Mm_k$ is a not a strong monoidal functor, 
the underlying $k$-vectorspace of the $H$-comodule algebra $A$ does in general no longer possess the structure of an associative $k$-algebra.

Recall that two co-quasi bialgebras $H$ and $H'$ are called gauge-equivalent iff there exists a monoidal equivalence $F:\Mm^H\to\Mm^{H'}$ that commutes with the forgetful functors. In particular, if
a co-quasi Hopf algebra $H$ is gauge equivalent with a usual Hopf algebra $H_F$, this means that we have a monoidal functor $F:\Mm^H\to \Mm^{H_F}$ such that $U_H=U_{H_F}\circ F:\Mm^H\to \Mm_k$.
As we know that the forgetful functor $U_{H_F}:\Mm^{H_F}\to \Mm_k$ is strict monoidal, we find that the forgetful functor $U_H$ is again monoidal, although not necessarily strict monoidal.
Consequently, we obtain that any (initial non-associative) $H$-comodule algebra, also possesses the structure of an associate $k$-algebra, by deforming its multiplication by means of the functor $U_H$ in the following way 
$$\xymatrix{\mu':A\ot A=U_HA\ot U_HA \ar[rr] && U_H(A\ot A) \ar[rr]^-{\mu} && U_HA=A},$$
where $\mu$ is the multiplication of the $H$-comodule algebra $A$.
Using the converse argumentation, certain associative $k$-algebras can be deformed into non-associative ones by means of a gauge-transform. This idea is explored in a very elegant way in \cite{AlbMa}, where it is shown that the octonions arise as a deformation of the group algebra $k[\ZZ_2\times \ZZ_2\times \ZZ_2]$ by a $2$-cochain, and in this way can be interpreted as a comodule algebra over a co-quasi Hopf algebra.

\subsubsection{Quasitriangular Hopf algebras}

Similar as in the previous section, we can introduce quasitriangular Hopf algebras by postulating the following characterization:
\begin{quote}
\index{Hopf algebra!quasitriangular}
Let $(H,m,u)$ be a $k$-algebra. Then there is a bijective correspondence between
\begin{enumerate}[(i)]
\item {\em quasitriangular Hopf algebra} structures on $H$ with underlying algebra $(H,m,u)$; and
\item right closed braided monoidal structures on the category of right $H$-modules $\Mm_H$ such that the forgetful functor $U:\Mm_H\to \Mm_k$ is a right closed braided strict monoidal functor.
\end{enumerate}
\end{quote}

Quasitriangularity is a in fact a property of a Hopf algebra, not a true variation on the axioms. 
Every quasitriangular Hopf algebra is a Hopf algebra, but not conversely. Explicitly, a Hopf algebra $(H,m,u,\Delta,\epsilon,S)$ is quasitriangular if there exists an invertible element (called the $R$-matrix) $R=r^1\ot r^2=R^1\ot R^2\in H \otimes H$ such that  
\begin{eqnarray*}
r^1x_{(1)}\ot r^1x_{(2)} &=& x_{(2)}r^1\ot x_{(1)}r^2,\\
r^1_{(1)}\ot r^1_{(2)}\ot r^2 &=& r^1\ot R^1\ot r^2R^2, \\
r^1\ot r^2_{(1)}\ot r^2_{(2)} &=& r^1R^1\ot R^2\ot r^1;
\end{eqnarray*}
for all $x \in H$.

As a consequence of the properties of quasitriangularity, the $R$-matrix is a solution of the Yang-Baxter equation. Therefore $H$-modules over a quasitriangular Hopf algebra are for example studied to determine quasi-invariants of braids and knots. In fact, quasitriangularity can already be considered for bialgebras, leading to the expected versions of reconstruction and Tannaka theorems.

\subsubsection{Weak Hopf algebras}

\index{Functor!Frobenius monoidal}
Let $(\Cc,\ot,I)$ and $(\Dd,\odot,J)$ be monoidal categories. A functor $F:\Cc\to\Dd$ is called {\em Frobenius monoidal} if $F$ has a monoidal structure $(\phi,\phi_0)$ and op-monoidal structure $(\psi,\psi_0)$ such that the following diagrams commute for all $A,B,C\in \Cc$
\begin{eqnarray*}
&\xymatrix{
F(A\ot B)\odot FC \ar[rr]^-{\phi_{A\ot B,C}} \ar[d]_-{\psi_{A,B}\odot FC} && F(A\ot B\ot C) \ar[d]^-{\psi_{A,B\ot C}}\\
FA \odot FB \odot FC \ar[rr]_-{FA\odot \phi_{B,C}} && FA\odot F(B\ot C)
}\\
&\xymatrix{
FA\odot F(B\ot C) \ar[rr]^-{\phi_{A,B\ot C}} \ar[d]_-{FA\odot\psi_{B,C}} && F(A\ot B\ot C) \ar[d]^-{\psi_{A\ot B, C}}\\
FA \odot FB \odot FC \ar[rr]_-{\phi_{A,B}\odot FC} && F(A\ot B)\odot FC
}
\end{eqnarray*}
A Frobenius monoidal functor is called {\em separable Frobenius monoidal} if moreover 
$$\phi_{A,B}\circ \psi_{A,B}=F(A\ot B)$$
for all $A,B\in\Cc$. Any strong monoidal functor is separable Frobenius monoidal.

Again, we introduce the next notion by postulating the following characterization
\begin{quote}
Let $(H,m,u)$ be a $k$-algebra. Then there is a bijective correspondence between
\begin{enumerate}[(i)]
\item {\em weak bialgebra} structures on $H$ with underlying algebra $(H,m,u)$; and
\item monoidal structures on the category of right $H$-modules $\Mm_H$ such that the forgetful functor $U:\Mm_H\to \Mm_k$ is a separable Frobenius monoidal functor.
\end{enumerate}
\end{quote}

The classical definition of a weak bialgebra and weak Hopf algebra was given in \cite{bohm:weakhopf}.
A $k$-algebra $(H,m,u)$ is a weak bialgebra if $H$ has a $k$-coalgebra structure $(H,\Delta,\epsilon)$, such that $\Delta$ is a multiplicative map (i.e. the first diagram of \equref{bialgebra} commutes) and the following weaker compatibility conditions hold
\begin{eqnarray*}
&(\Delta(1)\ot 1)(1\ot \Delta(1))=(\Delta\ot H)\Delta(1)=(1\ot \Delta(1))(\Delta(1)\ot 1)\\
&\epsilon(b1_{(1)})\epsilon(1_{(2)}b')=\epsilon(bb')=\epsilon(b1_{(2)})\epsilon(1_{(1)}b'),
\end{eqnarray*}
for $b,b'\in H$. A weak Hopf algebra is a weak bialgebra that is equipped with a $k$-linear map $S:H\to H$ satisfying
\begin{eqnarray*}
h_{(1)}S(h_{(2)})&=&\epsilon(1_{(1)}h)1_{(2)};\\
S(h_{(1)})h_{(2)}&=&1_{(1)}\epsilon(h1_{(2)});\\
S(h_{(1)})h_{(2)}S(h_{(3)})&=&S(h).
\end{eqnarray*}
The Tannaka reconstruction gives us the following characterization (see e.g.\ \cite{McC:Tannaka}).

\begin{quote}
There is a bijective correspondence between:
\begin{enumerate}[(i)]
\item weak Hopf $k$-Hopf algebras $H$; and
\item right rigid monoidal categories $\Vv$ together with a right rigid separable Frobenius monoidal functor $\Vv\to \Mm_k$.

\end{enumerate}
\end{quote}

Weak Hopf algebras are in relation with bimonads and Hopf monads, as we will discuss in more detail in \seref{bialgebroids}. To study the particularities of weak Hopf algebra theory however, weak monads, weak bimonads and weak Hopf monads were introduced in a series of papers (see e.g.\ \cite{bohm:weakmonad} \cite{BLS}). 

\subsection{Variations on the monoidal base category}

\subsubsection{Hopf algebroids}\selabel{bialgebroids}

It took quite a long time to establish the correct Hopf-algebraic notion over a non-commutative base. The reason of the difficulties are quite clear. First of all, if $R$ is a non-commutative ring, then the category of right $R$-modules $\Mm_R$ is no longer monoidal (in general). Therefore, we have to look in stead to the category of $R$-{\em bimodules} ${_R\Mm_R}$, which is monoidal, but in general still not braided. So bialgebras and Hopf algebras can not be computed {\em inside} this category. However, we can compute bimonads and Hopf-monads {\em on} this category. This is how the theory of bialgebroids can be developed. It has to be told that historically, bialgeboids and Hopf algebroids were constructed first in a more direct way, and the interpretion via bimonads and Hopf monads is only very recent. 
However, in order to make the relation between the different variations on Hopf algebra-like structures more prominent, we take this converted approach in this note and introduce bialgebroids by postulating the following characterization (reformulation of \cite{Schau}[Theorem 5.1]):
\begin{quote}
\index{Bialgebroid}
Let $B$ be an $R\ot R^{op}$-algebra $(B,m,u)$ (i.e.\ an algebra in the monoidal category ${_R\Mm_R}$), then there is a bijective correspondence between
\begin{enumerate}[(i)]
\item (right) {\em $R$-bialgebroid} structures on $B$, with underlying algebra $(B,m,u)$; and
\item monoidal structures on the category of right $B$-modules such that the forgetful functor $U:\Mm_B\to {_R\Mm_R}$ is strict monoidal. 
\end{enumerate}
\end{quote}
A particular feature of bialgebroids, is that rather than a unit map $u:R\ot R^{op}\to B$, one considers the source and target maps $s:R\to B$ and $t:R^{op}\to B$, which are the combination of the unit map $u$ with the canonical injections $R\to R\ot R^{op}$ and $R^{op}\to R\ot R^{op}$ (respectively).

As one can see, different from the case over a commutative base, the notion of a bialgeboid is not left-right symmetric. A left bialgebroid is introduced symmetrically, as an $R\ot R^{op}$-algebra with monoidal structure on its category of left modules.

Due to this assymetry, several different notions of a Hopf algebroid were introduced in the literature. Some of these were shown to be equivalent, although this was far from being trivial. We omit this disscusion here, but refer to the survey \cite{bohm:HoA}. The presently overall accepted notion of a Hopf algebroid (introduced in \cite{BohmSzl:hgdax} for bijective antipodes) consists of a triple $(H_L,H_R,S)$, where $H_L$ is a left $L$-algebroid, $H_R$ is a right $R$-algebroid, such that $H_L$ and $H_R$ share the same underlying $k$-algebra $H$. The structure maps have to satisfy several compatibility conditions for which we refer to 
\cite{bohm:HoA}[Definition 4.1], and $S:H\to H$ is the $k$-linear antipode map that satisfies the following axiom
$$\mu_L\circ (S\ot_L H)\circ \Delta_L=s_R\circ \epsilon_R,\qquad \mu_R\circ (H\ot_R S)\circ \Delta_R=s_L\circ \epsilon_L,$$
where $\mu_L, \Delta_L, \epsilon_L$ and $s_L$ are respectively the multiplication map, the comultiplication map, the counit map and the source map of the left bialgebroid $H_L$.

The way we defined a right bialgebroid $B$ over $R$, tells immediately that $-\ot_RB$ is a bimonad on ${_R\Mm_R}$.
It turns out that the bimonad $-\ot_RH_R$ associated to a Hopf algebroid is a right Hopf monad on ${_R\Mm_R}$ and the bimonad $H_L\ot_L-$ becomes a left Hopf monad on ${_L\Mm_L}$. The converse is not always true, but the alternative notion of a $\times_R$-Hopf algebra introduced by Schauenburg (see \cite{Schau:xRHopf}) is defined as a right $R$-bialgebroid such that the canonical morphism 
$\can:H\ot_{R^{op}}H\to H\ot_RH,\ \can(h\ot_{R^{op}}h')=h_{(1)}\ot_Rh_{(2)}h'$ is a bijection.
This leads to the following  characterization.
\begin{quote}
Let $B$ be an $R\ot R^{op}$-algebra, then there is a bijective correspondence between
\begin{enumerate}[(i)]
\item $\times_R$-Hopf algebra structures on $B$,
\item right closed monoidal structures on the category of right $B$-modules such that the forgetful functor $U:\Mm_B\to {_R\Mm_R}$ is right closed.
\end{enumerate}
\end{quote}

It should be remarked that weak Hopf algebras (respectively weak bialgebras) are strongly related to Hopf algebroids (resp. bialgebroids). To some extend, weak Hopf algebras motivated largely the recent development in the theory of Hopf algebroids. 
Let $H$ be a weak bialgebra. Then $\Mm_H$ is a monoidal category, but the forgetful functor $\Mm_H\to \Mm_k$ is not strict monoidal, not even strong monoidal, it is only separable Frobenius monoidal. However, if we consider the $H$-subalgebra
$$R=\JVim \pi_R, \quad {\rm where}\quad \pi_R:H\to H,\ \pi_R(b)=1_{(1)}\epsilon(b1_{(2)}),$$
which is called the target space, then we do obtain a strict monoidal functor $\Mm_H\to {_R\Mm_R}$. 
In this way, the weak bialgebra $H$ becomes a right $R$-bialgebroid, and in a similar way a weak Hopf algebra becomes a weak Hopf algebroid see \cite[section 3.2.2 and section 4.1.2]{bohm:HoA}. For a detailed discussion of the monoidal properties of the representation categories of weak bialgebras, we refer to \cite{BCJ}.

\subsubsection{Hopf Group coalgebras}

Hopf group-coalgebras were introduced by Turaev in his work on homotopy quantum field theories (see \cite{Tur2} and the earlier preprint \cite{Tur}). The purely algebraic study of these objects was initiated by Virelizier in \cite{Vir}. Explicitly, a Hopf group-coalgebra is a family of algebras $(H_g,\mu_g,\eta_g)_{g\in G}$ indexed by a group $G$ with unit $e$, together with a family of algebra maps
$$\Delta_{g,h}:H_{gh}\to H_g\ot H_h,\quad \forall g,h\in G$$
and an algebra map $\epsilon:H_e\to k$ and a family of $k$-linear maps $S_g:H_{g^{-1}}\to H_g,\ \forall g\in G$ such the following compatibility conditions hold for all $g,h,f\in G$
\begin{eqnarray*}
&(\Delta_{g,h}\ot H_f)\circ \Delta_{gh,f}=(H_g\ot \Delta_{h,f})\circ \Delta_{g,hf}\\
&(H_g\ot \epsilon)\circ \Delta_{g,e}=H_g=(\epsilon\ot H_g)\circ\Delta_{e,g}\\
&\mu_g\circ (S_g\ot H_g)\circ \Delta_{g^{-1},g}=\eta_g\circ \epsilon = \mu_g\circ (H_g\ot S_g)\circ \Delta_{g,g^{-1}}
\end{eqnarray*}
In \cite{CDL} the nice observation was made that these objects can be understood as Hopf algebras in a particular symmetric monoidal category.

Let us first recall a general construction in category theory.
Let $\Cc$ be a category. Then we can construct a new category $\Fam(\Cc)$, the category of families in $\Cc$, as follows:
\begin{itemize}
\item an object in $\Fam(\Cc)$ is a pair $(I,\{C_i\}_{i\in I})$, where $I\in\Set$;
\item A morphism in $\Fam(\Cc)$ is a pair $(f,\phi):(I,\{C_i\}_{i\in I})\to (J,\{D_j\}_{j\in J})$ consisting of a map $f:I\to J$ and a family of $\Cc$-morphisms $\phi_i:C_i\to D_{f(i)}$.
\end{itemize} 
Dually, we define $\Maf(\Cc)=\Fam(\Cc^{op})^{op}$. This is the category with the same objects, but morphisms are pairs $(f,\phi):(I,\{C_i\}_{i\in I})\to (J,\{D_j\}_{j\in J})$ consisting of a map $f:J\to I$ and a family of $\Cc$-morphisms $\phi_j:C_{f(j)}\to D_{j}$. If $\Cc$ is monoidal, braided monoidal or closed than $\Fam(\Cc)$ and $\Maf(\Cc)$ are as well in a canonical way. In \cite{CDL}, the category $\Fam(\Cc)$ was called the Zunino category and $\Maf(\Cc)$ was called the Turaev category associated to $\Cc$. The reason for these names is the observation we borrow from the same paper, based on the computation of algebras, coalgebras, bialgebras and Hopf algebras in these categories.
\begin{itemize}
\item Algebras in $\Fam(\Cc)$ are nothing else than algebras graded by a monoid; coalgebras in $\Fam(\Cc)$ are just a family of coalgebras indexed by a set.
\item Algebras in $\Maf(\Cc)$  are nothing else than families of algebras indexed by a set; coalgebras in $\Maf(\Cc)$ are coalgebras that are a kind of co-graded coalgebras, called $G$-coalgebras in \cite{Tur}.
\item bialgebras and Hopf algebras in $\Fam$ are Hopf algebras graded by a group and bialgebras and Hopf algebras in $\Maf$ are ``co-graded'' versions, called group Hopf coalgebras in \cite{Tur}.
\end{itemize}
As Hopf group coalgebras are just Hopf algebras a particular braided monoidal category, the reconstruction theorems \thref{Reconstrbialg} and \thref{ReconstrHopfalg} can be directly applied to this situation.

\subsubsection{Multiplier Hopf algebras}
 
Let $A$ be a non-unital algebra. An (right) $A$-module is called {\em firm} as if the multiplication map induces an isomorphism $M\ot_AA\cong M$. The algebra $A$ is called a firm algebra if it is firm as left or equivalently right regular $A$-module.
In this situation, then the category of firm  $A$-bimodules is again a monoidal category with monoidal unit $A$. 
Examples of this kind of non-unital algebras are so-called algebras with {\em local units}, that is, algebras such that for any $a\in A$, there exists an element $e\in A$ such that $ae=a=ea$. If $S$ is an infinite set, then the algebra of functions with finite support $\fHom(S,k)$ is a non-unital algebra with local units. 

{Multiplier Hopf algebras} are a generalization of Hopf algebras in the setting of non-unital algebras. They were motivated by study of non-compact quantum groups in the setting $C^*$-algebras, but studied in a purely algebraic setting since introduction. The non-compactness of their underlying space is directly related to the fact that the algebra is non-unital. However, it was proven that multiplier Hopf algebras always have local units. 

If $A$ is a non-unital $k$-algebra, then the multiplier algebra of $A$ is the $k$-module $M(A)$ that is defined by the following pullback
\[
\xymatrix{
M(A)\ar[rr] \ar[d] && {_A\JVEnd}(A) \ar[d]^{\ol{(-)}}\\
\JVEnd_A(A) \ar[rr]_-{\ul{(-)}} && {_A\JVHom_A}(A\ot A,A)
}
\]
where we used the linear maps
\begin{eqnarray}
\ol{(-)}:{_A\JVEnd}(A)\to {_A\JVHom_A}(A\ot A,A),\ \bar{\rho}(a\ot b)=\rho(a)b,\ {\rm for}\ \rho\in {_A\JVEnd}(A);\ \ \\
\ul{(-)}:\JVEnd_A(A)\to {_A\JVHom_A}(A\ot A,A),\ \ul{\lambda}(a\ot b)=a\lambda(b),\ {\rm for}\ \lambda\in \JVEnd_A(A).\ \
\end{eqnarray}
Remark that if $A$ is unital then $A\cong \JVEnd_A(A)\cong {_A\JVEnd}(A)\cong {_A\JVHom_A}(A\ot A,A)$ in a canonical way, 
hence also $M(A)\cong A$.
We can understand $M(A)$ as the set of pairs $(\lambda,\rho)$, where $\lambda\in \JVEnd_A(A)$ and $\rho\in {_A\JVEnd}(A)$, such that 
\begin{equation}\eqlabel{compatlambdarho}
a\lambda(b)=\rho(a)b,
\end{equation}
for all $a,b\in A$. Elements of $M(A)$ are called \emph{multipliers}. For any $x\in M(A)$, we will represent this element as $(\lambda_x,\rho_x)$. Moreover, for any $a\in A$, we will denote
$$a\cdot x=\rho_x(a), \quad x\cdot a=\lambda_x (a),$$
then \equref{compatlambdarho} reads as $a(xb)=(ax)b$. 
One obtains in a canonical way that $M(A)$ is a unital algebra and there is a canonical algebra morphism $\iota:A\to M(A)$. Moreover, if $f:A\to M(B)$ is a morphism of algebras and the maps
\begin{eqnarray*}
A\ot B \to B,\ a\ot b\mapsto f(a)\cdot b\\
B\ot A\to B,\ b\ot a\mapsto b\cdot f(a)
\end{eqnarray*}
are surjective, one can always extend this map to a morphism $\bar f:M(A)\to M(B)$. Such morphisms are called non-degenerate.

Let $A$ be an algebra with local units. Consider a non-degenerate algebra morphism $\Delta:A\to M(A\ot A)$, such that for all $a,b\in A$, $\Delta(a)(1\ot b)\in A\ot A$ and $(b\ot 1)\Delta(a)\in A\ot A$. Then we can express the following coassociativity condition for all $a,b,c \in A$,
$$(c\ot 1\ot 1)(\Delta\ot A)(\Delta(a)(1\ot b)) = (A\ot \Delta)((c\ot 1)\Delta(a)) (1\ot 1\ot b).$$
Now consider the following ``fusion maps'' or canonical maps
\begin{eqnarray*}
  T_1:A\ot A\to A\ot A,&& T_1(a\ot b)=\Delta(a)(1\ot b);\\
T_2:A\ot A\to A\ot A,&& T_1(a\ot b)=(a\ot 1)\Delta(b).
\end{eqnarray*}
Following Van Daele \cite{VD:Mult}, we say that $A$ is a multiplier Hopf algebra if there is a non-degenerate coassociative comultiplication $\Delta:A\to M(A\ot A)$ as above such that the maps $T_1$ and $T_2$ are bijective.

It can be shown that $A$ is a multiplier Hopf algebra if and only if there exists a counit $\epsilon:A\to k$ and an antipode $S:A\to M(A)$ satisfying conditions similar to the classical case, but which have to be formulated with the needed care.

The full categorical description of multiplier Hopf algebras is not settled yet. A first attempt was made in \cite{JanVer}. In this paper a reconstruction theorem for multiplier bialgebras was given. By a multiplier bialgebra we mean a non-unital algebra $A$ (with local units), that has a coassociative non-degenerate comultiplication $\Delta:A\to M(A\ot A)$ and a counit $\epsilon:A\to k$. Then according \cite[Theorem 2.9]{JanVer}, we have the following characterization
\begin{quote}
Given an algebra with local units $A$, there is a bijective correspondence between 
\begin{enumerate}[(i)]
\item
multiplier bialgebra structures on $A$; and
\item 
monoidal structures on the categories $\Mm_A$ of firm right $A$-modules, ${_A\Mm}$ of firm left $A$-modules and the category $A-\JVExt$ of ring extensions $A\to A'$, where $A'$ is again a ring with local units, such that the following diagram is a diagram of forgetful functors is strict monoidal
\[
\xymatrix{
& A-\JVExt \ar[dr] \ar[dl] \\
{_A\Mm} \ar[dr] && \Mm_A \ar[dl]\\
& \Mm_k
}
\]
\end{enumerate}
\end{quote}

In forthcomming \cite{JanVer:Kleisli}, some classes of Multiplier Hopf algebras, such as multiplier Hopf algebras with a complete set of central local units including discrete quantum groups, are studied as Hopf algebras in a particular monoidal category. This monoidal category is closely related to the category $\Maf(\Cc)$ of the previous section, which indicates as well the close relationship between Hopf group coalgebras and multiplier Hopf algebras. In particular, for a group Hopf coalgebra $(H_g)_{g\in G}$, we have that $\oplus_{g\in G} H_g$ is a multiplier Hopf algebra, whose multiplier algebra is given by $M(\oplus_{g\in G} H_g)=\prod_{g\in G}H_g$.

\subsubsection{Hom-Hopf algebras}
Another variation of the classical notion of Hopf algebra that got a lot of attention recently, are so-called Hom-Hopf algebras (see \cite{HomHopf}). This concerns non-associative algebras $H$, whose non-associativity is ruled by a $k$-linear endomorphism $\alpha\in\JVEnd(H)$. It was proven in \cite{Isar} that Hom-Hopf algebras can be viewed as Hopf algebras in a monoidal category whose objects are pairs $(X,f)$, where $X$ is a $k$-module and $f$ is a $k$-automorphism of $X$, and where the associativity constraint is non-trivial. Moreover, an Hom-Hopf algebra is nothing else than a usual Hopf algebra, together with a Hopf algebra automorphism.

\subsection{More Hopf algebra-type structures and applications}

\subsubsection{Yetter-Drinfel'd modules}

For any monoidal category $\Cc$, one can construct its {\em center}, which is the braided monoidal category $\Zz(\Cc)$ whose objects are pairs $(A,u)$, where $A$ is an object of $\Cc$ and $u_X:A\ot X\to X\ot A$ is a natural transformation that satisfies
$$u_{X\ot Y}=(X\ot u_Y)\circ (u_X\ot Y),\quad u_I\simeq A$$
An arrow from $(A,u)$ to $(B,v)$ in $\mathcal{Z(C)}$ consists of an arrow $f:A \rightarrow B$ in $\mathcal{C}$ such that
$$v_X (f \otimes 1_X) = (1_X \otimes f) u_X.$$
The category $\mathcal{Z(C)}$ becomes a braided monoidal category with the tensor product on objects defined as
$$(A,u) \otimes (B,v) = (A \otimes B,w)$$
where $w_X = (u_X \otimes 1)(1 \otimes v_X)$, and the obvious braiding.
Let $H$ be a $k$-bialgebra. As we have seen, the category of right $H$-modules is a monoidal category. 
We can now define the category of Yetter-Drinfel'd modules as the center of  the monoidal category $H$-modules,
$\Yy\Dd^H_H=\Zz(\Mm_H)$. Moreover, as the tensor product is preserved, we obtain the following diagram of monoidal forgetful functors
\[
\xymatrix{
\Yy\Dd^H_H\ar[rr]\ar[rd] && \Mm_H \ar[dl] \\
&\Mm_k
}
\]
Hence there exists a quasi-triangular bialgebra $\Dd(H)$ such that $\Yy\Dd^H_H\cong \Mm_{\Dd(H)}$. We call $\Dd(H)$ the Drinfel'd double of $H$. 

Moreover, if $H$ is a finite dimensional Hopf algebra, then one can consider the rigid category $\Mm^f_H$ and it can be checked that $\Zz(\Mm^f_H)$ is rigid as well. This allows to construct a Hopf algebra structure on $\Dd(H)$, which can in this case be computed  explicitly as a crossed product on $H$ and $H^*$.

Furthermore, as $\Yy\Dd^H_H$ is again braided monoidal category, one can consider bialgebras and Hopf algebras {\em inside} this category. These objects are called Yetter-Drinfeld Hopf algebras and studied for example in \cite{Som}. As explained by Takeuchi \cite{Tak:braidedhopf} this is (almost) equivalent as to consider braided Hopf algebras, that is Hopf algebras in the category of $k$-modules, but where one uses a ``local braiding'' in stead of the usual twist maps.

\subsubsection{Monoidal structures on categories of relative Hopf modules}

Let $H$ be a bialgebra. Then we know that the category of (right) $H$-modules $\Mm_H$ and the category of (right) $H$-comodules $\Mm^H$ are monoidal categories. Hence we can consider bimonads on these categories, and Hopf monads on these categories if $H$ is a Hopf algebra. Suppose for example that $A$ is an $H$-comodule algebra. Than we have a monad 
$$\AA=-\ot A:\Mm^H\to \Mm^H.$$
Moreover, the category of $\AA$-modules coincides with the category of relative Hopf modules $\Mm^H_A$. By \thref{Tannakabimonad} $\AA$ is a bimonad if and only if $\Mm^H_A$ is a monoidal category and the forgetful functor $\Mm^H_A\to\Mm^H$ is a strict monoidal forgetful functor. 
For example, if $A$ is an bialgebra in the monoidal centre of $\Mm^H$, that is $A$ is a bialgebra in the category of Yetter-Drinfel'd modules over $H$, then the conditions are fulfilled and $\Mm^H_A$ is a monoidal category (see also \cite{BulCae}).

\subsubsection{Hopfish algebras}

Hopfish algebras were introduced in \cite{TWZ:Hopfish} and motivated by Poisson geometry, where there was a need to describe the structure on irrational rotation algebras, see \cite{BloTanWei}. From the purely algebraic point of view, a Hopfish algebras can be motivated by the very nice feature that they provide a Morita invariant notion of Hopf algebra-like structure. It is clear that the notion of a usual Hopf algebra is not at all Morita invariant: if an algebra $A$ is Morita equivalent with a Hopf algebra $H$, then $A$ has not necessarily the structure of a Hopf algebra. It follows from the theory of Tang, Weinstein and Zhu that $A$ however possesses a structure that they call a {\em Hopfish algebra}. Different from usual Hopf algebras, Hopfish algebras should be understood using  higher category theory. As usual Hopf algebras live in a braided monoidal base category, Hopfish algebras live in a monoidal bicategory. The theory for Hopfish algebras, and Tannaka theory in particular, is far from being fully explored. 

Suppose that $A$ is a Morita equivalent algebra with the Hopf algebra $H$. Then there is a strict Morita context $(A,H,D,E,\mu,\tau)$ and we have functors
$$\xymatrix{\Mm_A \ar[rr]^-F && \Mm_H \ar[rr]^-U && \Mm_k}.$$
Here $F\simeq -\ot_AD$ is an equivalence of categories, 
and $U$ is the strict monoidal forgetful functor. Using the equivalence between $\Mm_H$ and $\Mm_A$ we can put a monoidal structure on the category $\Mm_A$ by 
$$X\odot Y:= ((X\ot_A D)\ot (Y\ot_A D))\ot_H E$$
In particular, we see that the forgetful functor $U':\Mm_A\to \Mm_k$ is no longer a (strict) monoidal functor, which explains that $A$ is no longer a Hopf algebra, not even a bialgebra. The "comultiplication" is now a bimodule rather than a morphism. Indeed, if we calculate $A\odot A=(D\ot D)\ot_H E=:{\bf \Delta_A}$
Then this is naturally a left $A\ot A$-module and a right $A$-module. On the other side ${\bf \epsilon_A}:=k\ot_H E$ is a $k$-$A$ bimodule. The triple $(A,{\bf \Delta_A},{\bf \epsilon_A})$ then becomes a {\em sesquiunital sesquialgebra}, that satisfies the following coassociativity and counitality axioms:
$$(A\ot {\bf \Delta_A})\ot_{A\ot A} {\bf \Delta_A}\cong ({\bf \Delta_A}\ot A)\ot_{A\ot A}{\bf \Delta_A},$$
as $A\ot A\ot A$-$A$ bimodules and 
$$({\bf \epsilon_A}\ot A)\ot_{A\ot A}{\bf \Delta_A}\cong A\cong (A\ot {\bf \epsilon_A})\ot_{A\ot A}{\bf \Delta_A}$$
as $A$-bimodules. Conversely, the category of right $A$-modules of a sesquiunital sesquialgebra can be endowed with a monoidal structure, however without having a strict monoidal forgetful functor to the underlying category of $k$-modules. 

Furthermore, $A$ is a Hopfish algebra if it has antipode, which is defined as a left $A\ot A$-module $\bf S$ such that the $k$-dual of $\bf S$ is isomorphic with the right $A\ot A$-module $\JVHom_A({\bf \epsilon_A},{\bf \Delta_A})$, and such that $\bf S$ is free of rank one if it is considered as a $A$-$A^{op}$ bimodule.

\subsubsection{Topological Hopf algebras and locally compact quantum groups}

As generally known, the theory of Hopf algebras has known a great revival thanks to the discovery of quantum groups. Different from pure Hopf algebras, the theory of quantum groups is however not always purely algebraic but involves often topological and analytical features. This has lead to generalizations of the notion of a Hopf algebra.

First, one can consider a monoidal category of well-behaving topological vectorspaces with completed tensorproduct. A Hopf algebra in this category is called a topological Hopf algebra, see e.g. \cite{Larson} and \cite{TopHA}.

Motivated by the Pontryagin duality for locally compact topological commutative groups, Vaes and Kustermans \cite{KusVae} introduced locally compact quantum groups. To present day, a full Tannaka reconstruction theory for these locally compact quantum groups is not known to the author. 

\subsubsection{Combinations}

Of course, it is possible to make combinations of two or more of several of the structures that we have reviewed above. For example structures as weak quasi Hopf algebras, quasi-triangualar quasi Hopf algebras, weak group Hopf coalgebras, weak multiplier Hopf algebras have been investigated by several authors. 

\subsubsection{Further generalizations and prospects}
\selabel{prospects}

To combine all generalizations of Hopf algebras into a single and unifying framework, it became clear during the recent years that the proper setting will not be given by braided monoidal categories, not even by Hopf monads (as treated above). Rather one will have to move to a higher categorical setting, considering monoidal bicategories, 2-monoidal categories and possibly even more involved structures (see e.g. \cite{Chik}, \cite{BooStr}). These recent developments are indicating that the theory of Hopf algebras and the interrelation with (higher) category have an exciting future ahead.

\subsection*{Acknowledgement}
The author would like to thank Gabi B\"ohm for useful comments on an earlier version of the paper and Stef Caenepeel for discussions.

\end{document}